\def\g{          \mathfrak g}
\def\h{           \mathfrak h}
\def\Or{          \mathcal O}
\def\htop{              h_{top}}
\def\a{         \alpha}
\def\R{ {\bf R}   }
\newcommand{\NN}{{\mathbb N}}
\newcommand{\RR}{{\mathbb R}}
\newcommand{\TT}{{\mathbb T}}
\newcommand{\ZZ}{{\mathbb Z}}
\newcommand{\CC}{{\mathbb C}}
\newtheorem{theo}{\sc Theorem}[section]
\newtheorem{prop}[theo]{\sc Proposition}
\newtheorem{lemm}[theo]{\sc Lemma}
\newtheorem{coro}[theo]{\sc Corollary}
\theoremstyle{definition}
\theoremstyle{remark}
\newtheorem{rema}[theo]{\sc Remark}
\newtheorem{rems}[theo]{\sc Remarks}
\numberwithin{equation}{section}
\begin{document}
\title[Simultaneous dense and nondense]{Simultaneous dense and nondense orbits for commuting maps}
\author{Vitaly Bergelson}
\thanks{V.B acknowledges support received from the National Science Foundation via Grant DMS-1162073}
\address{V.B.: Department of Mathematics, Ohio State University, Columbus, Ohio 43210}
\email{vitaly@math.ohio-state.edu}
\author{Manfred Einsiedler}
\address{M.E.:  Departement Mathematik, ETH Z\"urich, R\"amistrasse 101, 8092, Z\"urich, Switzerland}
\email{manfred.einsiedler@math.ethz.ch}
\thanks{M.E. acknowledges support by the SNF (200021-152819).}
\author{Jimmy Tseng}
\address{J.T.:  School of Mathematics, University of Bristol, University Walk, Bristol, BS8 1TW UK}
\email{j.tseng@bristol.ac.uk}
\thanks{J.T. acknowledges the research leading to these results has received funding from the European Research Council under the European Union's Seventh Framework Programme (FP/2007-2013) / ERC Grant Agreement n. 291147.}

\begin{abstract} We show that, for two commuting automorphisms of the torus and for two elements of the 
	Cartan action on compact higher rank homogeneous spaces, many points have drastically different orbit structures for the two maps. 
	Specifically, using measure rigidity, we show
	that the set of points that have dense orbit under one map and nondense orbit under the second 
	has full Hausdorff dimension.
\end{abstract}

\maketitle
\section{Introduction}\label{secIntro}

Let $f: X \rightarrow X$ be a dynamical system on a set $X$ with a topology. If $f$ is an endomorphism, we will always only consider the forward orbits of points $x\in X$ and, if $f$ is an automorphism we will work with full two-sided orbits. Let us say that a point has \textit{dense orbit} if its orbit closure equals $X$ and \textit{nondense orbit} otherwise.  We will denote by~$D(f)$ the set of points with dense orbit and by~$ND(f)$ the set of points with nondense orbit.   A natural question one could ask about $f$ is what is the nature of the sets $D(f)$ and $ND(f)$?  One way to quantify this nature is by some notion of size such as density, cardinality, Hausdorff dimension, or measure (assuming that these notions apply to $X$).  Instances of this question have been answered for many ergodic dynamical systems.  The gist of these answers is that many ergodic systems that have some hyperbolic or expanding behavior have in addition to $D(f)$ having full measure also the property that $ND(f)$ is winning in the sense of Schmidt games, from which it follows that both $D(f)$ and $ND(f)$ have full Hausdorff dimension (see~\cite{AN, BFK, Da, Da2,Do2,Far, Kl2,KM2, MT, T4, Ur} for example. 

Now, given another dynamical system $\tilde{f}$ on $X$, the natural extension of our question is to the joint behavior of the pair of systems.  For these systems, ergodicity under both maps immediately implies that the set $D(f)\cap D(\tilde{f})$ of points with dense orbits under both maps is of full measure.  Similarly, winning immediately implies that the set $ND(f)\cap ND(\tilde{f})$ of points with nondense orbits under both maps is of full Hausdorff dimension.  Finally, there is the mixed case $D(f)\cap ND(\tilde{f})$, which is not amenable to either the full-measure or winning techniques (as the intersection will be neither).  
The subject of this paper is this mixed case, where we will restrict ourselves to the case 
where $f$ and $\tilde{f}$ commute with each other.  We say a set $A \subset X$ is \textit{jointly invariant under $f$ and $\tilde{f}$} if $f(A) \subset A$ and $\tilde{f}(A) \subset A$.  

It is clear that $D(f)\cap ND(\tilde{f})$ could be empty.  For example, this could occur if there exists a nontrivial topological factor of~$X$ on which~$f$ and $\tilde{f}$ are equal or just too closely related. Therefore, the mixed case requires more assumptions.  We consider two types of systems.

The first type of systems we consider is the case of commuting automorphisms $T$ and $S$ of the $d$-dimensional torus $\TT^d$. Consider the $\ZZ^2$-action $\a$ on $\TT^d$ that is generated by~$T$ and $S$. This action is called \textit{irreducible} if there are no proper, infinite, closed subgroups which are jointly invariant under the action.  A $\ZZ^2$-action $\a'$ on a torus $\TT^{d'}$ is an \textit{algebraic factor} of $\a$ if there is a surjective toral homomorphism $h:\TT^d \rightarrow \TT^{d'}$ such that $\a' \circ h = h \circ \a$ and is a \textit{rank-one factor} if, in addition, $\a'(\ZZ^2)$ has a finite-index subgroup consisting of the powers of a single map.  An endomorphism or automorphism of~$\TT^d$ is \textit{hyperbolic} if its associated matrix has no eigenvalues on the unit circle in the complex plane.   Let $\dim(\cdot)$ denote Hausdorff dimension.

Our main results are the following.
\begin{theo}\label{thmCommToralMapsLarNondenDen}  Let $T,S$ be commuting automorphisms of the torus $\TT^d$ for $d \geq 2$ such that $T$ and $S$ 
	generate an algebraic $\ZZ^2$-action without rank-one factors.  Assume that~$T$ is hyperbolic.
	Then $ND(T) \cap D(S)$ has full Hausdorff dimension.
\end{theo}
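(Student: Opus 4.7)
The plan is to prove the theorem by constructing, for each $\eta>0$, a $T$-invariant and $T$-ergodic probability measure $\mu_\eta$ on $\TT^d$ supported inside $ND(T)$ with Hausdorff dimension at least $d-\eta$, and then showing via measure rigidity that $\mu_\eta$-almost every point lies in $D(S)$. This would place $\mu_\eta$ inside $ND(T)\cap D(S)$; since the Hausdorff dimension of a set is bounded below by the (lower) dimension of any probability measure it carries, one then obtains $\dim(ND(T)\cap D(S))\geq d-\eta$, and letting $\eta\to 0$ finishes the proof.

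For the construction of $\mu_\eta$, fix a point $p\in\TT^d$ and set $K_\eta := \{x\in\TT^d : T^n x\notin B(p,\eta)\text{ for all }n\in\ZZ\}$, a closed $T$-invariant subset of $ND(T)$. Standard thermodynamic formalism for the hyperbolic automorphism $T$ restricted to the locally maximal hyperbolic set $K_\eta$ (Bowen's formula for Hausdorff dimension, or the Ledrappier--Young formula applied to an equilibrium state of a suitable geometric potential) produces a $T$-ergodic measure $\mu_\eta$ on $K_\eta$ whose Hausdorff dimension tends to $d$ as $\eta\to 0$; in particular $h_{\mu_\eta}(T)>0$. This positive-entropy property is the key quantitative input for the rigidity step that follows.

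The central step is to show $\mu_\eta(D(S))=1$. First observe that $D(S)$ is $T$-invariant, since $\overline{\{S^n(Tx):n\in\ZZ\}} = T\bigl(\overline{\{S^n x:n\in\ZZ\}}\bigr) = T(\TT^d) = \TT^d$ whenever $x\in D(S)$; so $T$-ergodicity of $\mu_\eta$ gives $\mu_\eta(D(S))\in\{0,1\}$. To rule out the $0$ case, form the Ces\`aro averages $\nu_N := \frac{1}{N}\sum_{n=0}^{N-1}S^n_*\mu_\eta$. Because $T$ and $S$ commute, each $S^n_*\mu_\eta$ is $T$-invariant, hence every $\nu_N$ is $T$-invariant, and any weak-$*$ subsequential limit $\nu$ is invariant under the full $\ZZ^2$-action $\alpha=\langle T,S\rangle$. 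Affinity of $h_{\cdot}(T)$ together with upper semicontinuity of entropy for the expansive map $T$ yields $h_\nu(T)\geq h_{\mu_\eta}(T)>0$. The measure rigidity theorem for $\alpha$-invariant measures on $\TT^d$ under the no rank-one factor assumption (in the spirit of Katok--Spatzier and Einsiedler--Lindenstrauss; the non-rank-one condition is precisely what forbids rigid ergodic components supported on a proper rational subtorus) then identifies every $\alpha$-ergodic component of $\nu$ of positive $T$-entropy as Haar measure on $\TT^d$. A pointwise argument of Host type (analogous to his 1995 theorem on normal numbers for commuting integer multiplications, suitably adapted to the commuting toral-automorphism setting) upgrades this averaged rigidity statement to the pointwise conclusion that $(S^n x)_n$ equidistributes to Haar measure for $\mu_\eta$-a.e.\ $x$; in particular $\mu_\eta(D(S))=1$.

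The main obstacle is precisely this last upgrade from averaged to pointwise equidistribution. The rigidity-based identification of $\nu$ alone is not enough: if a positive-$\mu_\eta$-measure set of points had $S$-orbit avoiding some fixed open ball $B$, one would only deduce $\nu(B)\leq 1-\mu_\eta(\{x:S^n x\notin B\text{ for all }n\geq 0\})$, which is consistent with $\nu$ being Haar. The genuine pointwise statement is therefore the real measure-rigidity content of the theorem, and its proof must leverage the positive $T$-entropy of $\mu_\eta$ together with the hyperbolic contraction--expansion structure of $T$ to control the $S$-distribution of $\mu_\eta$-typical orbits; the no rank-one factor hypothesis is essential here to provide sufficient algebraic independence between the $T$- and $S$-directions of the action.
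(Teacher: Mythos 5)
Your overall strategy --- produce a $T$-invariant, $T$-ergodic measure of large entropy supported in $ND(T)$, show via measure rigidity applied to Ces\`aro averages under $S$ that this measure gives full mass to $D(S)$, and then convert entropy into Hausdorff dimension --- is indeed the same as the paper's. But the gap you flag at the end is real and is precisely where the paper has to work.

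You correctly observe that knowing a weak-$*$ limit $\nu=\lim\frac1N\sum S^n_*\mu_\eta$ has Haar as an ergodic component does not by itself contradict $\mu_\eta(D(S))=0$: Ces\`aro averaging smears out whatever obstruction a positive-measure set of points may have, and Haar can re-emerge in the limit even if every orbit misses a ball. The paper's fix is not an upgrade to pointwise equidistribution in the Host sense (and in fact the paper never proves equidistribution of $(S^n x)$, only density). Instead it proceeds by a countable \emph{decomposition of $\nu$ by the obstruction}: it partitions $ND(S)$ into sets $ND(i,n)$ of points whose $S$-orbit closure misses the fixed ball $B_{1/n}(x_i)$, writes $\nu=\sum_{(i,n)}\nu|_{ND(i,n)}$, and Ces\`aro-averages each piece separately. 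Crucially, the averaging preserves the property of giving zero mass to $B_{1/n}(x_i)$, so each limiting piece $\mu_{(i,n)}$ is an $S$-invariant measure that misses an open ball and is therefore singular to Haar; summing, $\mu$ is singular to Haar, contradicting the rigidity conclusion. This splitting into pieces indexed by a \emph{uniform} avoidance condition is the idea your proposal is missing, and it is exactly what replaces the (harder, and not needed) Host-type pointwise statement you appeal to.

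There is also a softer but genuine issue in your dimension step. You invoke Bowen's formula or thermodynamic formalism to produce a measure on $K_\eta$ of Hausdorff dimension close to $d$. Bowen's dimension formula requires conformality of the map on the stable/unstable leaves, which fails for a general hyperbolic automorphism of $\TT^d$ (the unstable linear map typically has several distinct eigenvalue moduli). The paper instead first proves an elementary covering estimate (Proposition~\ref{propLowerEntropyBnd}) bounding $h_{\textrm{top}}(T|_{E(q)})$ from below in terms of $\dim E(q)$, uses the variational principle to get a $T$-ergodic measure $\nu$ of large entropy, and then goes from large entropy to large Hausdorff dimension by applying the Ledrappier--Young formula separately on unstable and stable leaves and gluing via the Marstrand slicing theorem. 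If you want to realize your plan, you should replace the Bowen/thermodynamic-formalism step with this entropy-to-dimension mechanism, which works in the non-conformal setting.
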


\begin{coro}\label{coroCommToralMapsLarNondenDen}
Fix an algebraic~$\ZZ^2$-action on~$\TT^d$ without rank-one factors and let~$T$ be a hyperbolic element of the action.
Let $\{S_k\}$ be the family of all maps which meet the conditions on $S$ in Theorem~\ref{thmCommToralMapsLarNondenDen}.  Then $ND(T) \cap \bigcap_k D(S_k)$ has full Hausdorff dimension.
\end{coro}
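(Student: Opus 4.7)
The plan is to deduce Corollary~\ref{coroCommToralMapsLarNondenDen} from Theorem~\ref{thmCommToralMapsLarNondenDen} by showing that the measure of near-full dimension produced in the theorem's measure-rigidity construction can serve as a witness for every $S_k$ simultaneously.

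First, $\{S_k\}$ is countable: each $S_k$ lies in the fixed $\ZZ^2$-action $\alpha(\ZZ^2)$ on $\TT^d$, hence is indexed by some $\mathbf{n}_k \in \ZZ^2$, and the no-rank-one-factor condition on $\langle T, S_k\rangle$ cuts out a further countable subset. Consequently, $\bigcap_k D(S_k) = \bigcap_k\bigcap_{B \in \mathcal{B}}\bigcup_{n} S_k^{-n}(B)$ is a countable intersection of $G_\delta$ sets, where $\mathcal{B}$ is a countable base of open sets of $\TT^d$.

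The central step is to extract from the proof of Theorem~\ref{thmCommToralMapsLarNondenDen} the following strengthening: for each $\epsilon>0$, there exists a Borel probability measure $\mu_\epsilon$ on $\TT^d$ that is (a) of Hausdorff dimension at least $d-\epsilon$, (b) supported in $ND(T)$, (c) invariant under the full $\ZZ^2$-action $\alpha$, and (d) with $\mathrm{supp}(\mu_\epsilon)$ dense in $\TT^d$. Given such $\mu_\epsilon$, the no-rank-one-factor hypothesis on $\langle T, S_k\rangle$ forces $\mu_\epsilon$ to be $S_k$-ergodic for every $k$. By Birkhoff's ergodic theorem, $\mu_\epsilon$-a.e.\ point has $S_k$-orbit equidistributed in $\mathrm{supp}(\mu_\epsilon)$ and therefore dense in $\TT^d$. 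Hence $\mu_\epsilon(D(S_k))=1$ for every $k$, so by countable additivity $\mu_\epsilon\!\left(ND(T)\cap\bigcap_k D(S_k)\right)=1$. This set therefore has Hausdorff dimension at least $d-\epsilon$, and $\epsilon\to 0$ gives the result.

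The main obstacle is producing $\mu_\epsilon$ with all four properties (a)--(d) simultaneously and verifying that its $S_k$-ergodicity holds uniformly across the countable family. Properties (a)--(c) are the natural output of the measure-rigidity construction of Theorem~\ref{thmCommToralMapsLarNondenDen}. Property (d) can be arranged either by a translation-averaging step that preserves (a)--(c), or else sidestepped by a parallel Cantor-type diagonal construction: enumerate the countable family of pairs $(S_k, B)$ with $B \in \mathcal{B}$ as $(S_{k_1},B_{i_1}),(S_{k_2},B_{i_2}),\ldots$, and in the nested construction of Theorem~\ref{thmCommToralMapsLarNondenDen} replace the stage-$j$ visit condition for $S$ by the visit condition for $(S_{k_j},B_{i_j})$. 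The uniform $S_k$-ergodicity follows from the standard measure-rigidity observation that any obstruction to $S_k$-ergodicity would produce a rank-one factor of $\langle T,S_k\rangle$, contradicting the hypothesis; the uniform dimension bookkeeping in the Cantor approach rests on the same quantitative transitivity estimates already used for $S$ in the original proof.
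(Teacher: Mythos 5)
Your proposal contains a genuine gap at its central step: the measure $\mu_\epsilon$ with properties (a)--(d) cannot exist, and this is forced by the very measure rigidity on which the theorem rests. If $\mu_\epsilon$ were invariant under the full $\ZZ^2$-action, ergodic for that action, and had Hausdorff dimension close to $d$, then (via the Ledrappier-Young relation between dimension along the unstable foliation and entropy) its $T$-entropy would be close to $h_{\textrm{top}}(T)$, and Corollary~\ref{coroMeasureRigidity} would force it to be Haar measure --- but Haar is concentrated on $D(T)$, not $ND(T)$, contradicting (b). If $\mu_\epsilon$ is not $\alpha$-ergodic, the same argument applied to an ergodic component of high dimension yields the same contradiction. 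The paper evades this tension deliberately: the key measure $\nu$ is only $T$-invariant and $T$-ergodic, supported on $E(q)\subset ND(T)$ with high $T$-entropy; it is emphatically \emph{not} $S$-invariant. The $S$-averaging producing a jointly invariant measure $\mu$ is an internal device in the proof of Theorem~\ref{theoNuAisFullMeas}, used by contradiction, and that averaged $\mu$ does \emph{not} remain supported in $ND(T)$ (it must have Haar as an ergodic component of positive proportion). You cannot have both joint invariance and support inside $ND(T)$ together with high dimension. A related error: you claim the no-rank-one-factor hypothesis ``forces $\mu_\epsilon$ to be $S_k$-ergodic''; that hypothesis is a property of the algebraic action, not of any particular invariant measure, and does not yield ergodicity of the measure under an individual generator.

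There is also a scope issue in your countability argument. The family $\{S_k\}$ in the corollary is \emph{all} automorphisms of $\TT^d$ commuting with $T$ for which the joint $\ZZ^2$-action has no rank-one factor, not merely elements $\alpha(\mathbf{n})$ of the originally fixed action. Your justification (indexing by $\mathbf{n}_k\in\ZZ^2$) is therefore too restrictive, although the conclusion survives because $\operatorname{GL}(d,\ZZ)$ is countable. The paper's actual proof is much shorter than what you attempt: having fixed $\nu$ on $E(q)$ as in Theorem~\ref{theoNuAisFullMeas}, one notes that that theorem applies to each $S_k$ separately to give $\nu(D(S_k))=1$, hence $\nu\bigl(\bigcap_k D(S_k)\bigr)=1$ by countable additivity, and then the Ledrappier-Young plus Marstrand slicing argument from the proof of Theorem~\ref{thmCommToralMapsLarNondenDen} applies verbatim with $D(S)$ replaced by $\bigcap_k D(S_k)$. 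No new measure is constructed and no ergodicity under the individual $S_k$ is needed; the theorem's $\nu$ already does all the work.
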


 Recall that the unstable foliation of a hyperbolic toral endomorphism $T$ is comprised of a family of unstable manifolds of the same dimension $\geq 1$.  We denote this dimension by $\dim(W^u(T))$.

\begin{theo}\label{thmCommToralMapsLarNondenDen2}  
	Let $T,S$ be commuting hyperbolic epimorphisms of the torus $\TT^d$ 
	for $d \geq 1$ such that $T$ and $S$ 
	generate an algebraic action without rank-one factors.  Then 
	\[\dim\big(ND(T) \cap D(S)\big) \geq \dim(W^u(T))\geq1.\]
\end{theo}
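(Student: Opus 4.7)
The plan is to build, inside a single local unstable manifold $W^u_T(x_0)$ of a chosen base point $x_0 \in \TT^d$, a Cantor set $K$ of Hausdorff dimension equal to $\dim W^u(T)$ that is contained in $ND(T) \cap D(S)$. Since $W^u_T(x_0)$ is a smooth injectively immersed leaf of dimension $\dim W^u(T)$, producing such a $K$ immediately yields the bound asserted by the theorem. The approach parallels that of Theorem~\ref{thmCommToralMapsLarNondenDen}, but is carried out inside a single unstable leaf of $T$, which is why we obtain $\dim W^u(T)$ rather than $d$.

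First, I would set up the expanding action of $T$ on the leaf. Because $T$ is a hyperbolic epimorphism of $\TT^d$, its unstable foliation is well-defined, and after identifying a local unstable leaf through a periodic point $x_0$ with a neighborhood of the origin in $\RR^{\dim W^u(T)}$, the restriction of $T$ acts as an expanding linear map. On this leaf I would run a Schmidt-winning style construction to produce a Cantor set $K_0 \subset W^u_T(x_0)$ of Hausdorff dimension $\dim W^u(T)$ consisting of points whose forward $T$-orbit avoids some fixed non-empty open subset $V \subset \TT^d$. This gives $K_0 \subset ND(T)$ and is a direct adaptation of the classical badly-approximable construction; only the expanding behavior of $T$ on the leaf is used.

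Second, I would upgrade $K_0$ so that every point in it has $S$-orbit dense in $\TT^d$. Here I would invoke measure rigidity: under the hypotheses (commuting hyperbolic epimorphisms generating an algebraic action without rank-one factors), the Einsiedler--Lindenstrauss / Bhattacharya classification of $\alpha$-invariant probability measures with positive entropy for some element of the action forces the measure to be Haar. From this, combined with the commutativity of $T$ and $S$, I would extract a quantitative visit lemma of the following form: for each non-empty open $B \subset \TT^d$ there exists $N = N(B) \in \NN$ such that every sufficiently small relatively open piece of every $T$-unstable leaf contains a point $y$ with $S^N(y) \in B$. Fixing a countable basis $\{B_n\}_{n \in \NN}$ for the topology of $\TT^d$ and weaving these density requirements into the branching choices at successive depths of the Cantor construction upgrades $K_0$ to a Cantor set $K \subset K_0$ of the same Hausdorff dimension $\dim W^u(T)$ with $K \subset D(S)$.

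The main obstacle is the interleaving in the second step. The Schmidt-winning construction of the first step is tuned to produce a Cantor set of maximal dimension on the leaf, while the $S$-density requirement is a countable family of open conditions that must be arranged without losing too much branching. The key technical input needed is a uniform version of the visit lemma: at every sufficiently deep level of the Cantor tree, each surviving cell must contain a descendant that realizes the current $B_n$-visit at the required resolution. This uniform equidistribution of $S$-orbits along unstable leaves of $T$ is precisely what measure rigidity, combined with the absence of rank-one factors, is designed to provide; nevertheless, calibrating the quantitative bounds so that they remain compatible with the Schmidt construction (and the Hausdorff dimension of the final $K$ is not degraded below $\dim W^u(T)$) is the delicate technical core of the argument. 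Choosing $x_0$ to be a hyperbolic $T$-periodic point, for which local unstable leaves are uniformly linearly expanded, should help in making all the estimates uniform across levels of the construction.
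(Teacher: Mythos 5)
Your overall strategy — produce a set of dimension $\dim W^u(T)$ inside a single unstable leaf, consisting of points that are simultaneously $T$-nondense and $S$-dense — is in the right spirit, but the central technical input you rely on is not available, and this is a genuine gap. You posit a ``quantitative visit lemma'': for every nonempty open $B \subset \TT^d$ there is $N=N(B)$ such that \emph{every} sufficiently small piece of \emph{every} $T$-unstable leaf contains a point $y$ with $S^N(y)\in B$. Measure rigidity does not furnish such a uniform pointwise statement. What the measure classification (Theorem~\ref{thm: ELmain} / Corollary~\ref{coroMeasureRigidity}) gives you is a constraint on \emph{jointly $T,S$-invariant ergodic measures with high entropy}; to convert this into something about orbits you have to pass through an invariant measure, and the output is at best an almost-everywhere statement with respect to that measure, not a statement holding on every cell of every leaf. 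In fact the paper goes out of its way to avoid needing such a uniform equidistribution: it constructs a $T$-invariant ergodic measure $\nu$ of high entropy supported on a nondense-orbit set $E(q)$, averages $\nu$ under $S$ to obtain a jointly invariant measure $\mu$, uses rigidity to show Haar is a positive-proportion ergodic component of $\mu$ (Proposition~\ref{PropErgCompHaar}), and then argues \emph{by contradiction} that $\nu(D(S))=1$ (Theorem~\ref{theoNuAisFullMeas}). That is a soft, indirect route precisely because the hard uniform statement you want is out of reach.

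A second, downstream problem is that even granting an a.e.\ version of your visit lemma, the ``interleaving'' step is not justified. Your first stage produces a Schmidt-style Cantor set $K_0\subset W^u_T(x_0)\cap ND(T)$ of full leaf dimension, but $K_0$ is an uncountable set constructed combinatorially, and there is no reason the set $K_0\cap D(S)$ retains full dimension: the equidistribution information you would have is with respect to some reference measure (Lebesgue on the leaf, or the leafwise conditional of a carefully chosen $\nu$), not with respect to whatever measure lives on $K_0$. The paper resolves this by choosing the measure first: $\nu$ lives on $E(q)$, satisfies $\nu(D(S))=1$, and by the Ledrappier--Young formula (in the endomorphism version of Qian--Xie) its conditionals on unstable leaves have pointwise dimension $\delta_u(q)\to\dim W^u(T)$ as $q\to\infty$. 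The mass distribution principle applied to those conditionals then directly yields $\dim\big(D(S)\cap E(q)\cap W^u(x)\big)\geq\delta_u(q)$ for $\nu$-a.e.\ $x$, with no Cantor construction and no need to reconcile two independent set-building procedures. To repair your proposal you would essentially have to rebuild this measure-theoretic machinery; the deterministic Cantor construction by itself cannot carry the $D(S)$ requirement.
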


 We also consider commuting partially hyperbolic maps on homogeneous spaces.

\begin{theo}\label{thmCommGroupMapsLarNondenDen}
	Let $G$ be a connected semi-simple Lie group  such that each simple normal subgroup of~$G$ has $\RR$-rank $\geq 2$.  
	Let $a_1, a_2$ belong to a maximal abelian $\RR$-diagonal subgroup of $G$ and assume that they correspond to linearly independent directions in every simple factor of the Lie algebra of~$G$.  Let $\Gamma \subset G$ be a cocompact lattice in $G$.  Let $\theta_1, \theta_2$ be the actions on $X=\Gamma \backslash G$ associated with $a_1, a_2$, respectively.  Then $ND(\theta_1) \cap D(\theta_2)$ has full Hausdorff dimension. 
\end{theo}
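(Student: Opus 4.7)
The plan is to adapt the Cantor-set plus measure-rigidity scheme used for Theorems~\ref{thmCommToralMapsLarNondenDen} and~\ref{thmCommToralMapsLarNondenDen2}, substituting the toral rigidity input with the measure rigidity theorem of Einsiedler, Katok, and Lindenstrauss for $\RR$-diagonalizable actions on compact quotients of semi-simple Lie groups. Concretely, fix $\epsilon > 0$ and a non-empty open set $U \subset X$; it suffices to produce a subset of $\big\{x \in X : \theta_1^n x \notin U \text{ for all } n \geq 0\big\} \cap D(\theta_2)$ of Hausdorff dimension at least $\dim X - \epsilon$, since letting $U$ range over a countable base of $X$ exhausts $ND(\theta_1)$. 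Compactness of $X$ (from cocompactness of $\Gamma$) removes any escape-of-mass concern in the weak-$*$ arguments that follow.

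I would first perform the standard Cantor construction at a base point $x_0 \in X$. The local foliation chart decomposes a small box at $x_0$ into a product of local $\theta_1$-unstable, neutral, and stable manifolds, and in each factor the standard inductive tree construction -- removing at generation $n$ a controlled neighborhood of the trace of $\theta_1^{-n}(U)$ -- yields a Cantor subset of Hausdorff dimension exceeding $\dim(\cdot) - \epsilon/(3\dim X)$. The product image $F \subset X$ then has $\dim F \geq \dim X - \epsilon$, is contained in the $\theta_1$-orbit-avoidance set for $U$ (hence in $ND(\theta_1)$), and carries a natural self-similar product measure $\mu$ whose conditional measures along $\theta_1$-unstable leaves have positive Hausdorff dimension close to $\dim W^u(\theta_1) \geq 1$.

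The core of the argument is to combine measure rigidity with the non-triviality of these unstable conditionals to show that $\mu$-a.e. point of $F$ has a dense $\theta_2$-orbit. I would argue by contradiction: fix a countable base $\{V_k\}$ of non-empty open subsets of $X$ and suppose for some $k$ that $B_k = \{x \in F : \theta_2^n x \notin V_k \text{ for all } n \geq 0\}$ satisfies $\mu(B_k) > 0$. Let $\nu$ be the normalized restriction of $\mu$ to $B_k$, and extract a weak-$*$ subsequential limit $\bar{\nu}$ of the joint Cesaro averages
\[
\bar{\nu}_{M,N} = \frac{1}{MN}\sum_{m=0}^{M-1}\sum_{n=0}^{N-1}(\theta_1^m \theta_2^n)_* \nu,
\]
obtaining a probability measure on $X$ invariant under the $\ZZ^2$-action generated by $\theta_1$ and $\theta_2$. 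Because $\theta_1,\theta_2$ commute and act on $\theta_1$-unstable leaves by quasi-conformal transformations, the positive-dimensional $\theta_1$-unstable conditionals of $\mu$ descend (via a Ledrappier-Young type inequality) to strictly positive $\theta_1$-entropy of $\bar{\nu}$. The $\RR$-rank $\geq 2$ hypothesis on each simple factor of $G$ together with linear independence of $a_1,a_2$ in each simple Cartan rules out concentration of positive-entropy ergodic components on rank-one sub-orbits, so the Einsiedler-Katok-Lindenstrauss measure rigidity theorem forces $\bar{\nu}$ to equal the Haar measure $m_X$. A bookkeeping argument combining the $U$-avoidance of $F$ and the $V_k$-avoidance of $B_k$ -- tracking these invariances through the joint averaging -- then exhibits a non-empty open subset of $X$ given zero mass by $\bar{\nu}$, contradicting $\bar{\nu} = m_X$. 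Hence $\mu(B_k) = 0$ for every $k$, so a set of full $\mu$-measure in $F$ lies in $D(\theta_2)$ and thus in $ND(\theta_1) \cap D(\theta_2)$, with Hausdorff dimension at least $\dim X - \epsilon$.

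I expect the main obstacle to be precisely this passage to the $\ZZ^2$-invariant limit: organizing the construction so that the $B_k$- and $U$-avoidance, which are only forward invariant under $\theta_2$ and $\theta_1$ respectively, yield a non-trivial obstruction surviving the joint averaging. Related to this is the need to verify that the unstable-conditional dimension of $\mu$ is preserved by the averaging and translates into the precise entropy hypothesis of the Einsiedler-Katok-Lindenstrauss theorem. Both the $\RR$-rank $\geq 2$ hypothesis on simple factors and the linear-independence hypothesis on $a_1,a_2$ are indispensable for these steps.
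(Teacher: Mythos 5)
Your overall strategy---build a large fractal inside $ND(\theta_1)$, attach a measure, and use measure rigidity to force $\theta_2$-density a.e.---is the right one, and it matches the paper's scheme in spirit. But the implementation has several concrete gaps, and the paper takes a materially different (and cleaner) route through precisely the places you flag as difficulties.

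\textbf{The measure must be $\theta_1$-invariant from the start.} A self-similar product measure supported on a Cantor set $F$ built by removing $\theta_1^{-n}(U)$-neighborhoods is not $\theta_1$-invariant, and without $\theta_1$-invariance the quantity $h_\mu(\theta_1)$ is not even defined, let alone controllable via Ledrappier--Young. The paper instead takes the closed $\theta_1$-invariant set $E(q) = \{x : \Or_{\theta_1}(x) \cap U_q = \emptyset\}$ (thick by Kleinbock--Margulis), bounds $h_{\text{top}}(\theta_1|_{E(q)})$ from below in terms of $\dim E(q)$ (Proposition~\ref{propLowerEntropyBnd}), and then invokes the variational principle to get a $\theta_1$-invariant ergodic $\nu$ on $E(q)$ with $h_\nu(\theta_1)$ close to $h_{\text{top}}(\theta_1)$. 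This yields a genuine high-entropy invariant measure supported in $ND(\theta_1)$, rather than a geometrically plausible but dynamically uncontrolled one.

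\textbf{Joint Cesaro averaging destroys the obstruction, and you should not use it.} You average $\nu$ by $\theta_1^m\theta_2^n$ and then want to exhibit an open set of zero $\bar\nu$-mass from the $V_k$-avoidance of $B_k$. But $B_k$ is avoidant only under $\theta_2$, and pushing forward by $\theta_1^m$ moves points out of the avoidance set; the obstruction is gone after the $\theta_1$-averaging. You identify this as the main difficulty, and indeed it is fatal as stated. The fix---and what the paper does---is to average \emph{only} under $\theta_2$. Commutativity automatically makes the $\theta_2$-averages $\theta_1$-invariant (Lemma~\ref{lemm-T-inv}), the $\theta_1$-entropy is preserved through the averaging and the weak-$*$ limit by upper semicontinuity (Lemma~\ref{lemmTopMeasEntClose}), and, crucially, the $V_k$-avoidance survives: $\theta_2$-averages of $\nu|_{ND(i,n)}$ assign zero mass to the fixed ball $B_{1/n}(x_i)$. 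The paper's Theorem~\ref{theoNuAisFullMeas} then decomposes the hypothetical non-density set into countably many such pieces and contradicts Proposition~\ref{PropErgCompHaar}, which says the $\theta_2$-average has the Haar measure as an ergodic component of positive proportion.

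\textbf{The entropy input must be close to $h_{\text{top}}(\theta_1)$, not merely positive.} The measure rigidity used here (Theorem~\ref{theoMeasureRigidityII}, from \cite{EK}) is the ``high entropy'' theorem. Under the stated hypotheses on $G$, $a_1$, $a_2$ (e.g.\ a two-dimensional general-position subgroup of the Cartan of $\operatorname{SL}_2(\R)^3$), a ``positive entropy implies Haar'' statement is not available in the cited literature---the paper's remark after Theorem~\ref{theoMeasureRigidityII} makes this explicit. Your reliance on a positive-entropy version is therefore not justified, and your argument only claims ``strictly positive $\theta_1$-entropy,'' which is insufficient.

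\textbf{The neutral factor needs a separate argument.} Your Cantor construction along the $H^0$-factor is problematic: $\theta_1$ acts isometrically there, so the traces of $\theta_1^{-n}(U)$ on the central leaf do not shrink, and the inductive tree construction does not yield a high-dimensional Cantor set. Moreover Ledrappier--Young gives dimension along unstable and stable leaves only. The paper resolves the central direction differently: since $h\in H^0$ commutes with $a_1$, the push-forwards $\nu h$ are $\theta_1$-invariant with the same entropy, supported on $E(q)h\subset ND(\theta_1)$; applying Theorem~\ref{theoNuAisFullMeas} to each $\nu h$ plus Fubini gives full $H^0$-Haar measure of good points along $xH^0$ for $\nu$-a.e.\ $x$. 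Two applications of Marstrand slicing then add up $\delta_u(q)+\delta_s(q)+\dim H^0 \to \dim X$.

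In short: replace the Cantor construction with $E(q)$ + variational principle, average only under $\theta_2$, rely on high entropy for measure rigidity, and handle $H^0$ by the right-translation/Fubini argument rather than a neutral-direction Cantor set.
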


\begin{coro}\label{coroCommGroupMapsLarNondenDen}
	Let~$X,a_1$, and~$\theta_1$ be as in Theorem~\ref{thmCommGroupMapsLarNondenDen}. 
If, for a countable collection of maps $\{\theta_{2,k}\}$, each map meets the conditions on $\theta_2$ in Theorem~\ref{thmCommGroupMapsLarNondenDen}, then $ND(\theta_1) \cap \bigcap_k D(\theta_{2,k})$ has full Hausdorff dimension.
\end{coro}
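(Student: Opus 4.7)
The strategy is to inspect the proof of Theorem \ref{thmCommGroupMapsLarNondenDen} and extract from it a single dimension-full object whose points simultaneously have dense orbits under every $\theta_{2,k}$. The cleanest route is measure-theoretic: I would like to produce one probability measure $\mu$ on $X$ that is concentrated on $ND(\theta_1)$, has support of full Hausdorff dimension $\dim X$, and is ergodic with respect to $\theta_{2,k}$ for \emph{every} $k$.

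Reading the proof of Theorem \ref{thmCommGroupMapsLarNondenDen}, I expect to find a probability measure $\mu$ invariant under the full maximal abelian $\RR$-diagonal subgroup $A$ containing $a_1$, supported in $ND(\theta_1)$, and of maximal Hausdorff dimension. Ergodicity of $\mu$ under $\theta_2$ is what yields $\mu(D(\theta_2)) = 1$ (via the Birkhoff ergodic theorem applied to countably many open sets, using compactness of $X$). Since all the $a_{2,k}$ commute with $a_1$ and, being in the same maximal $\RR$-split Cartan, lie in $A$, the same measure $\mu$ is $a_{2,k}$-invariant for every $k$. The measure-rigidity input underlying Theorem \ref{thmCommGroupMapsLarNondenDen} (absence of rank-one algebraic factors combined with the linear-independence hypothesis in every simple factor) should yield that $\mu$ is ergodic with respect to \emph{any} $b \in A$ whose direction is linearly independent of $a_1$ in every simple factor; in particular, $\mu$ is ergodic under every $\theta_{2,k}$, so $\mu(D(\theta_{2,k}))=1$ for each $k$.

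With this in hand the corollary is immediate: a countable intersection of sets of full $\mu$-measure has full $\mu$-measure, so
\[ \mu\!\left( ND(\theta_1) \cap \bigcap_k D(\theta_{2,k}) \right) = 1, \]
and since $\mu$ has support of full Hausdorff dimension $\dim X$, the intersection itself has full Hausdorff dimension.

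\textbf{Main obstacle.} The delicate point is the simultaneous ergodicity of the constructed measure $\mu$ under every $\theta_{2,k}$. If the proof of Theorem \ref{thmCommGroupMapsLarNondenDen} happens to produce $\mu$ as ergodic only with respect to the specific $a_2$ that was fixed at the outset, then one would have to revisit the rigidity step and argue that on the measure-rigid classification list, the elements of $A$ acting non-ergodically form a proper subvariety avoided by the linear-independence hypothesis—which is the standard way such Cartan-action ergodicity is established. The alternative, if no single $\mu$ works, would be a diagonalization argument in which one refines a nested Cantor-like construction from the theorem to handle $\theta_{2,1},\dots,\theta_{2,n}$ at depth $n$; this is more technical but avoids relying on additional ergodicity statements beyond what the theorem already uses.
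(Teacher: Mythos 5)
There is a real gap. The paper does not construct a measure with the properties you postulate, and even if it did, the final inference would not follow.

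First, the measure that the argument actually revolves around is the $\nu$ from Theorem~\ref{theoNuAisFullMeas}: a $\theta_1$-invariant, $\theta_1$-ergodic probability measure supported on a \emph{fixed} proper closed invariant set $E(q)\subset ND(\theta_1)$, obtained from the variational principle for $\theta_1\!\mid_{E(q)}$. It is not invariant under the full Cartan subgroup $A$, it is not $\theta_2$-invariant, and its support has dimension strictly less than $\dim X$ (only close to it for large $q$). There is no single $A$-invariant measure of full-dimensional support in the proof. The key observation for the corollary — and the one that makes the paper's argument go through unchanged — is that $\nu$ depends only on $\theta_1$ and $q$, not on which $\theta_{2,k}$ one pairs it with, and the high-entropy threshold in the measure rigidity input (Corollary~\ref{coroMeasureRigidity}, resp.\ Theorem~\ref{theoMeasureRigidityII}) depends only on $\theta_1$. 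Hence Theorem~\ref{theoNuAisFullMeas} gives $\nu(D(\theta_{2,k}))=1$ for every $k$ simultaneously, so $\nu\bigl(\bigcap_k D(\theta_{2,k})\bigr)=1$. That is the genuine ``countable intersection'' step, and it is applied to $\nu$, not to a hypothetical measure ergodic under every $\theta_{2,k}$.

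Second, the conclusion ``$\mu$ has support of full Hausdorff dimension, hence any full-$\mu$-measure set has full Hausdorff dimension'' is false in general — a Dirac mass on a dense orbit already defeats it. The passage from $\nu\bigl(\bigcap_k D(\theta_{2,k})\bigr)=1$ to the dimension bound is precisely where the paper invokes the Ledrappier-Young formula (to control the pointwise dimension of $\nu$ on the unstable and stable foliations of $\theta_1$), the mass distribution principle, a separate Fubini argument over the centralizer directions $H^0$, and a double application of the Marstrand slicing theorem. One then lets $q\to\infty$ so that $\delta_u(q)+\delta_s(q)+\dim H^0\to\dim X$. Your plan skips this machinery entirely, and without it the full-dimension claim does not follow. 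In short: replace ``find a full-dimensional $A$-ergodic measure'' with ``observe that Theorem~\ref{theoNuAisFullMeas} applies to the same $\nu$ for every $\theta_{2,k}$, intersect, and rerun the Ledrappier-Young/Marstrand argument of Section~\ref{secHDFN} with $D(\theta_2)$ replaced by $\bigcap_k D(\theta_{2,k})$.''
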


\begin{rems}
Commuting maps $T$ and $S$ are called \textit{multiplicatively dependent} if there exist $(t,s) \in \ZZ^2 \backslash \{(0,0)\}$ such that $T^t = S^s$ and, otherwise, are \textit{multiplicatively independent}. If~$T$ and $S$ are multiplicatively dependent then we have~$ND(T)\cap D(S)=\emptyset$. If $T$ and $S$ generate a $\ZZ^2$-action on~$\TT^d$, then either it has no rank-one factors and Theorem~\ref{thmCommToralMapsLarNondenDen} (respectively, Theorem~\ref{thmCommToralMapsLarNondenDen2}) applies or it has a factor on which the maps are multiplicatively dependent. 

We note that our results can be viewed as weak versions of Host's theorem \cite{Host} for much more general dynamical systems. In fact, Host has shown in \cite{Host} that for a $\times 2$-invariant and ergodic probability measure $\mu$ on $\TT$ with positive entropy, $\mu$-a.e.~point is generic for the $\times 3$-map and the Lebesgue measure on~$\TT$. Our method of proof is similar to the theorem of Johnson and Rudolph \cite{Rudolph-Johnson} who proved an averaged version of Host's theorem and also to the argument in \cite{E-F-S} and \cite{Shi}. 

We note that the compactness assumption in Theorem~\ref{thmCommGroupMapsLarNondenDen} and Corollary~\ref{coroCommGroupMapsLarNondenDen}, while used in this paper via the variational principle and the Ledrappier-Young formula, may not be necessary. 

We also believe that~$ND(T_1)\cap ND(T_2)\cap D(S_1)\cap D(S_2)$ is non-empty
and maybe even be of full Hausdorff dimension for commuting maps~$T_1,T_2,S_1,S_2$ in general position to each other, 
but our method does not seem to extend to that case.

We make essential use of the assumption that the two maps commute. If they do not commute, completely different techniques are required 
(see~\cite{Alex-Beverly}). 
\end{rems}

\subsection{Organization of this paper}  The proofs of the toral case (Theorems~\ref{thmCommToralMapsLarNondenDen}~and~\ref{thmCommToralMapsLarNondenDen2}) and the Lie group case (Theorem~\ref{thmCommGroupMapsLarNondenDen}) involve two steps:  finding a certain measure and using that measure to derive the desired dimension result.  The first step is very similar for both cases and is presented in Sections~\ref{secAGHM} and~\ref{secEquidistUnderS}.  We present the toral case and describe the changes necessary for the Lie group case.  The second step is (slightly) different for the two cases and is presented in Section~\ref{secHDFN}.  For the toral case, we apply the Ledrappier-Young formula.  For the Lie group case the  central directions require extra care.

\section{Averaging gives Haar measure}\label{secAGHM}  The first steps of the proofs of Theorems~\ref{thmCommToralMapsLarNondenDen},~\ref{thmCommToralMapsLarNondenDen2}, and~\ref{thmCommGroupMapsLarNondenDen} are similar.  We start the proof with the case of a torus.

\subsection{Measure Rigidity for commuting toral endomorphisms}  
The key to this first step of our proof is the following measure rigidity result \cite[Theorem 1.3]{EL}.  
For this let us recall that a solenoid is a connected compact abelian group whose dual group has finite rank.
The reason why solenoids are important for us is that the invertible extension of a surjective endomorphism~$R:\TT^d\to\TT^d$
gives an automorphism~$\hat{R}:X\to X$ of a solenoid~$X$. In fact the invertible extension can be 
realized as the shift map on the solenoid
\[
 X=\bigl\{(x_n)\in(\TT^d)^{\ZZ}\mid T(x_n)=x_{n+1}\mbox{ for all }n\in\ZZ\bigr\}.
\]

\begin{theo}[Measure rigidity]\label{thm: ELmain}
Let $\alpha$ be a $\ZZ^d$-action ($d \geq 2$) by automorphisms of
a solenoid $X$.
Suppose $\alpha$ has no rank one factors, and let
$\mu$ be an $\alpha$-ergodic measure on $X$.
Then there exists a subgroup $\Lambda \subset\ZZ^d$ of finite index and a decomposition
$\mu=\frac{1}{M}(\mu_1+\ldots+\mu_M)$ of $\mu$ into mutually singular measures
with the following properties for every $i=1,\ldots,M$.
\begin{enumerate}
\item
Every measure $\mu_i$ is $\alpha_\Lambda$-ergodic,
where $\alpha_\Lambda$ is the restriction of $\alpha$ to $\Lambda$.
\item
There exists an $\alpha_\Lambda$-invariant closed subgroup $G_i$
such that $\mu_i$ is invariant under translation under elements in
$G_i$, i.e.\ $\mu_i(A)=\mu_i(A+g)$ for all $g \in G_i$ and every
measurable set $A$.
\item
For $n \in\ZZ^d$,
$\alpha(n)_*\mu_i=\mu_j$
for some $j$ and $\alpha(n)(G_i)=G_j$.
\item
Every measure $\mu_i$ induces a measure on the factor
$X/G_i$ with $h_{\mu_i}(\alpha(n)_{X/G_i})=0$
for any $n \in \Lambda$. (Here $\alpha_{X/G_i}$ denotes the
action induced on $X/G_i$).
\end{enumerate}
\end{theo}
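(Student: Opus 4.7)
The plan is to combine a routine ergodic decomposition with the measure rigidity machinery for commuting automorphisms of solenoids developed by Einsiedler--Katok--Lindenstrauss. First I would choose the finite index subgroup $\Lambda$ by appealing to the general fact that when $\alpha$ acts ergodically with respect to a $\ZZ^d$-invariant measure $\mu$, the $\alpha_\Lambda$-ergodic decomposition for any finite index $\Lambda\subset\ZZ^d$ consists of finitely many mutually singular components that are permuted by $\alpha/\alpha_\Lambda$. Taking $\Lambda$ to be the kernel of the induced permutation action gives the sought-after decomposition $\mu=\frac1M(\mu_1+\cdots+\mu_M)$ with each $\mu_i$ being $\alpha_\Lambda$-ergodic, and property (3) is automatic from the construction: $\alpha(n)$ permutes the components, and conjugation by $\alpha(n)$ carries the stabilizer of $\mu_i$ to the stabilizer of $\mu_j$.

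Second, I would introduce $G_i$ as the closed subgroup of $X$ consisting of all group translations preserving $\mu_i$, so that property (2) holds tautologically. Since $\alpha_\Lambda$ fixes $\mu_i$ and since translation by $g\in X$ and the automorphism $\alpha(n)$ satisfy $\alpha(n)\circ(\cdot+g)=(\cdot+\alpha(n)g)\circ\alpha(n)$, the subgroup $G_i$ is $\alpha_\Lambda$-invariant, and $\alpha(n)(G_i)=G_j$ follows from the same identity together with (3). Thus properties (1)--(3) are essentially formal once $\Lambda$ and the $G_i$ have been chosen.

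The substantive content is property (4), that the induced factor measure $\bar\mu_i$ on $X/G_i$ has $h_{\bar\mu_i}(\alpha(n)_{X/G_i})=0$ for all $n\in\Lambda$. Here I would invoke the measure rigidity framework. The key step is to analyze the conditional measures $\mu_i^{\mathcal L}$ of $\mu_i$ along the coarse Lyapunov foliations $\mathcal L$ of the $\alpha_\Lambda$-action on the solenoid $X$, passing if necessary to the canonical solenoidal extension. The high-entropy / low-entropy dichotomy of Einsiedler--Katok--Lindenstrauss states that each such conditional measure is either a point mass or has a nontrivial closed invariance group inside the corresponding coarse Lyapunov subgroup. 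The no-rank-one-factor hypothesis ensures that the Lyapunov spectrum of $\alpha_\Lambda$ is sufficiently generic that as soon as one coarse Lyapunov conditional has nontrivial invariance group $H$, the translates $\alpha(n)H$ generate a closed subgroup contained in $\mathrm{Stab}(\mu_i)=G_i$. By maximality of $G_i$, this forces the conditional measures on $X/G_i$ along every coarse Lyapunov foliation to be trivial. Feeding this into the Ledrappier--Young entropy formula applied to the factor $X/G_i$ yields the vanishing of $h_{\bar\mu_i}(\alpha(n)_{X/G_i})$ for every $n\in\Lambda$.

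The main obstacle is the last step, namely translating "trivial conditional measures on every coarse Lyapunov subspace" into "zero entropy of every $\alpha_\Lambda$-element on the factor." Two technical difficulties arise: the coarse Lyapunov subspaces need not be contained in a single stable subspace of any one element, so one must assemble the contributions over all elements simultaneously, and the solenoidal setting forces one to work with a finite rank but infinite dimensional group rather than a torus, requiring a more delicate identification of the coarse Lyapunov foliations and their invariance groups. Both of these are handled by the inverse limit machinery used to reduce from the solenoid to its finite-dimensional quotients, which is the technical backbone of the argument.
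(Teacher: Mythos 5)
The paper does not prove Theorem~\ref{thm: ELmain} at all: it is quoted verbatim from \cite{EL} (as Theorem~1.3 there), and the remark immediately following it notes that a complete proof in the irreducible case is in that research announcement, while the general case is deferred to the forthcoming \cite{ELW}. There is thus no ``paper's own proof'' to compare your proposal against; the theorem is a black-box input to the present paper, used only via its high-entropy Corollary~\ref{coroMeasureRigidity}.

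Taken on its own terms, your sketch does capture the overall architecture of the Einsiedler--Katok--Lindenstrauss program: pass to a finite-index $\Lambda$, decompose into $\alpha_\Lambda$-ergodic pieces permuted by $\ZZ^d/\Lambda$, take $G_i$ to be the translation stabilizer of $\mu_i$, and then control the entropy of the factor $X/G_i$ via coarse Lyapunov conditional measures. However, two places in your sketch elide genuinely hard points. First, the choice of $\Lambda$ is circular as stated --- the permutation action you want to take the kernel of depends on $\Lambda$; what one actually does is show that the possible invariance groups $G_i$ range over a set on which $\ZZ^d$ acts through a finite quotient, and take $\Lambda$ to be the kernel of that. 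Second, and more seriously, the step ``nontrivial invariance along one coarse Lyapunov direction forces the conditionals of $\mu_i$ on $X/G_i$ to be trivial along \emph{every} Lyapunov direction'' is not a formal consequence of maximality of $G_i$: this is precisely the content of the high-entropy and low-entropy methods, which require a detailed interplay between noncommuting foliations, the $S$-arithmetic/solenoidal structure, and the no-rank-one-factor hypothesis. Your acknowledgement of the inverse-limit reduction to finite-dimensional quotients is apt, but the passage from ``some positive entropy'' to ``full invariance modulo a zero-entropy factor'' is the heart of the matter and is not established by the argument you outline. In short, your proposal is a reasonable high-level description of the known proof strategy, but it is a sketch of a theorem that this paper itself deliberately leaves unproved, citing \cite{EL} and \cite{ELW} instead.
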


\begin{rema} A complete proof of the special case of \cite[Theorem 1.3]{EL} for irreducible systems is given in the research announcement \cite{EL}.   The general case will appear in \cite{ELW}.
	
	The restriction to a finite index subgroup~$\Lambda$ of the acting group~$\ZZ^d$ may be necessary
	since there may exist a subtorus~$G_1<\TT^d$ that is invariant under~$\Lambda$ but not under~$\ZZ^d$.
\end{rema}

\begin{coro}[High entropy case]\label{coroMeasureRigidity} 
	Let $T,S$ be commuting hyperbolic epimorphisms of the torus $\TT^d$ for $d \geq 1$ or 
	commuting  automorphisms of the torus $\TT^d$ for $d \geq 3$ such that 
	the induced~$\ZZ^2$-action has no rank one factor. 
  If $\mu$ is a $T,S$-invariant ergodic probability measure on $\TT^d$ with $h_\mu(T)$ close enough to the topological entropy $h_{\textrm{top}}(T)$, then $\mu$ is the Haar measure.
\end{coro}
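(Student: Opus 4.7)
The plan is to apply the measure-rigidity Theorem~\ref{thm: ELmain} to the $\ZZ^2$-action generated by $T$ and $S$, and then use near-maximal $T$-entropy of $\mu$ to force each piece of the resulting decomposition to coincide with Haar measure.

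First I reduce to the solenoid setting: if $T,S$ are only epimorphisms, I pass to their common invertible extensions $\hat T,\hat S$ on the solenoid $\hat X$ described just before Theorem~\ref{thm: ELmain}, lifting $\mu$ to an $(\hat T,\hat S)$-invariant ergodic $\hat\mu$ with the same measure and topological entropies for $T$. If instead $T,S$ are automorphisms of $\TT^d$ with $d\geq 3$, the torus is already a solenoid and $\hat X=\TT^d$. Theorem~\ref{thm: ELmain} then provides
\[
\hat\mu\;=\;\tfrac{1}{M}\bigl(\hat\mu_1+\cdots+\hat\mu_M\bigr),
\]
a finite-index subgroup $\Lambda\subset\ZZ^2$, and $\alpha_\Lambda$-invariant closed subgroups $G_i\subseteq\hat X$, with $\hat\mu_i$ invariant under $G_i$-translations and $h_{\hat\mu_i}(\alpha(n)|_{\hat X/G_i})=0$ for every $n\in\Lambda$. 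Since $\Lambda$ has finite index in $\ZZ^2$, some power $T^k$ lies in $\alpha(\Lambda)$, so $T^k$ preserves each $G_i$ and has zero entropy on the quotient $\hat X/G_i$.

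Next, by affinity of the entropy along the decomposition,
$h_{\hat\mu}(\hat T)=\tfrac{1}{M}\sum_i h_{\hat\mu_i}(\hat T)$,
and combining with $h_{\hat\mu_i}(\hat T)\leq h_{\mathrm{top}}(\hat T)$ forces each $h_{\hat\mu_i}(\hat T)$ to be within $M\epsilon$ of $h_{\mathrm{top}}(\hat T)$ once $h_\mu(T)>h_{\mathrm{top}}(T)-\epsilon$. I then apply the Abramov-Rokhlin formula to the group extension $\hat X\to\hat X/G_i$ under $T^k$: the base contributes zero by property~(4), while $G_i$-invariance forces the conditional measures of $\hat\mu_i$ on the $G_i$-cosets to equal Haar, and hence the fiber entropy equals $h_{\mathrm{top}}(T^k|_{G_i})$ (Haar is the measure of maximal entropy for a group automorphism). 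Dividing by $k$ yields
\[
h_{\hat\mu_i}(\hat T)\;=\;h_{\mathrm{top}}\!\bigl(\hat T|_{G_i}\bigr).
\]

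The final rigidity step---deducing $G_i=\hat X$---is the main obstacle. Because $\hat T$ is hyperbolic, $h_{\mathrm{top}}(\hat T|_G)$ equals the sum of positive Lyapunov exponents of $\hat T$ restricted to the Lie algebra of $G$, and the closed connected $\hat T^k$-invariant subgroups correspond to rational $\hat T^k$-invariant subspaces; these come in only finitely many types, so $\{h_{\mathrm{top}}(\hat T|_G)\}$ is a finite set. One therefore obtains a universal gap $\delta>0$ below $h_{\mathrm{top}}(\hat T)$ unless some proper invariant $G\subsetneq\hat X$ already contains every unstable direction of $\hat T$. In that borderline case the induced $\ZZ^2$-action on the nontrivial quotient $\hat X/G$ has zero $T$-entropy and (via $S^\ell\in\alpha(\Lambda)$ and the same Abramov-Rokhlin argument) zero $S$-entropy, so it factors through a finite quotient of $\ZZ^2$; the no-rank-one-factor hypothesis, combined with the hyperbolicity of $S$, must be used here to rule such a factor out. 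Choosing the closeness parameter below $\delta/M$ then forces $G_i=\hat X$ for every $i$, so each $\hat\mu_i$ is the Haar measure on $\hat X$, and consequently $\mu$ is the Haar measure on $\TT^d$.
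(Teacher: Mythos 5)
Your proposal follows the same broad strategy as the paper: pass to the solenoid/invertible extension, invoke Theorem~\ref{thm: ELmain} to obtain the decomposition $\mu = \frac1M\sum_i\mu_i$ with subgroups $G_i$, then use the Abramov--Rokhlin formula together with property~(4) (zero entropy on $X/G_i$) to localize the entropy on the fiber and conclude that each $G_i$ must equal the whole space. Up through the Abramov--Rokhlin step you are on the right track, and the observation that $G_i$-invariance makes the conditional measures on $G_i$-cosets Haar is fine (the paper actually only needs the weaker inequality $h_{\mu_1}(T^n\mid\mathcal{B}_{X/G_1})\leq\sum_\zeta\log^+|\zeta|$, which avoids this computation).

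The genuine gap is in the final rigidity step, and you partially flag it yourself. To force $G_i=\hat X$ you argue that there is a universal gap~$\delta>0$ below $h_{\mathrm{top}}(\hat T)$, ``unless some proper invariant $G\subsetneq\hat X$ already contains every unstable direction of $\hat T$,'' and you then propose to dispose of that borderline case via the no-rank-one-factor hypothesis and ``the hyperbolicity of~$S$.'' This does not work: in the automorphism branch of the corollary no hyperbolicity of~$S$ is assumed, and moreover zero entropy for both generators on $X/G$ does not imply the induced $\ZZ^2$-action factors through a finite quotient. Instead, the paper sidesteps the borderline case entirely by defining an explicit constant $\kappa\in(0,1)$ as the maximal ratio obtained by deleting one expanding eigenvalue (or lowering its multiplicity) from $\sum_\lambda\log^+|\lambda|$, and then showing that if $G_1\neq\TT^d$, the eigenvalues of $T^n\!\mid_{G_1}$ form a proper sub-multiset of those of $T^n$, so that $\frac1n\sum_\zeta\log^+|\zeta|\leq\kappa\,h_{\mathrm{top}}(T)$, contradicting $h_\mu(T)>\kappa\,h_{\mathrm{top}}(T)$. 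You should verify --- rather than assert --- that a proper $T^n$-invariant subtorus must drop at least one \emph{expanding} eigenvalue with multiplicity; this is where the hyperbolicity of $T$ (not $S$), together with the fact that $T^n$ on the quotient $X/G_1$ is again a surjective toral map and so has $|\det|\geq 1$, rules out the case where $G_1$ absorbs the entire unstable subspace. Without a correct argument for this point your ``gap $\delta$'' is not established and the conclusion does not follow.
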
 

\begin{proof}
	Let us first assume that~$T$ and~$S$ are invertible and~$d\geq 3$.	
	We define~$\kappa\in(0,1)$ by
	\[
	 \kappa=\frac{\max\sum_{\lambda}'\log^{+}(|\lambda|)}{\sum_{\lambda}\log^{+}(|\lambda|)},
	\]
	where the sum in the denominator runs over all the eigenvalues $\lambda$ of the matrix corresponding to~$T$ of absolute value
	bigger than one taking into
	account the algebraic multiplicity of each eigenvalue, and $\sum'$ denotes any sum 
	where one of the eigenvalues is dropped or one of the multiplicities is reduced.
	
	Now suppose~$\mu$ is a~$T,S$-invariant and ergodic probability measure on~$\TT^d$ with~$h_\mu(T)>\kappa h_{\textrm{top}}(T)$. We apply Theorem~\ref{thm: ELmain} to find~$\Lambda$ and~$G_1$.
As~$\Lambda$ has finite index in~$\ZZ^2$ there exists some~$n\geq 1$ such that~$(n,0)\in\Lambda$ and so~$T^n$ preserves~$G_1$ and  $h_{\mu_1}(T^n_{X/G_1})=0$. We claim that for~$\kappa\in(0,1)$ as above we must have~$G_1=\TT^d$ and so~$\mu$ is the Lebesgue measure on~$\TT^d$. 

In fact, if~$G_1<\TT^d$ is a proper subgroup, then by the Abramov-Rokhlin entropy addition formula
\[
h_{\mu_1}(T^n)=h_{\mu_1}(T^n_{X/G_1})+h_{\mu_1}(T^n|\mathcal{B}_{X/G_1}),
\]
where~$\mathcal{B}_{X/G_1}\subset\mathcal{B}_{\TT^d}$ denotes the Borel sub-$\sigma$-algebra corresponding to the factor~$X/G_1$. Now we apply the standard inequality of entropy (see for instance \cite[Thm.~7.9]{Pisa}) for the relative entropy  $h_{\mu_1}(T^n|\mathcal{B}_{X/G_1})$ which gives
\[
 h_{\mu_1}(T^n|\mathcal{B}_{X/G_1})\leq \sum_{\zeta}\log^+(|\zeta|)
\]
where the sum goes over all the eigenvalues $\zeta$ of the matrix corresponding to~$T^n$ when restricted to the rational subspace corresponding to~$G_1$ with algebraic multiplicity. While $G_1$ may not be invariant under the
matrix corresponding to $T$, it is clear that the eigenvalues $\zeta$ of the matrix corresponding to $T^n$ when restricted to $G_1$ are $n$-th powers of the eigenvalues $\lambda$ of the matrix corresponding to $T$. Moreover, since
$G_1$ is a proper subtorus 
either the former set of eigenvalues is a proper subset or for one of the eigenvalues the corresponding multiplicities disagree. Therefore,
\[
 h_\mu(T)=\frac{1}n h_\mu(T^n)=\frac1nh_{\mu_1}(T^n|\mathcal{B}_{X/G_1})\leq \frac1n
 \sum_{\zeta}\log^+(|\zeta|)\leq\kappa\sum_\lambda\log^+(|\lambda|)=\kappa h_{\textrm{top}}(T),
\]
which contradicts our assumption on~$\mu$.

If either $T$ or $S$ is not invertible, then we construct the invertible extension $X$ of $\TT^d$ (e.g.\ using the
map~$R=ST$) and apply
Theorem~\ref{thm: ELmain} in the same way to this solenoid. 
\end{proof}

To use Corollary~\ref{coroMeasureRigidity}, we must relate topological entropy to dimension. 
For hyperbolic toral endomorphisms or automorphisms, all directions are either expanding or contracting (i.e. the mapping is Anosov).  Only the expanding directions contribute to the entropy.  Let $|\lambda_1| >1$ be the largest absolute value of an eigenvalue of the matrix corresponding to~$T$.   Let $X=\TT^d$.

\begin{prop}\label{propLowerEntropyBnd} Let $F \subset X$ be a closed, $T$-invariant set.  Then we have that \[h_{\textrm{top}}(T\mid_F) \geq h_{\textrm{top}}(T) - (d - \dim F) \log(|\lambda_1|).\] 
\end{prop}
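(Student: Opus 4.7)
The plan is to estimate spanning numbers for~$F$ directly, using the geometry of Bowen balls together with a Hausdorff-content comparison. For $\epsilon>0$ small, let $r_n(\epsilon,F)$ denote the minimal cardinality of an $(n,\epsilon)$-spanning set for~$F$, i.e.\ a set~$S$ with $F\subset\bigcup_{x\in S}B_n(x,\epsilon)$, where $B_n(x,\epsilon)=\{y:d(T^ix,T^iy)<\epsilon,\ 0\le i<n\}$ is the Bowen ball. Then $h_{\textrm{top}}(T\mid_F)=\lim_{\epsilon\to 0^+}\limsup_n\tfrac1n\log r_n(\epsilon,F)$. Working in a (generalized) eigenbasis for the matrix of~$T$, each Bowen ball $B_n(x,\epsilon)$ lies in an affine rectangle with side~$\le C\epsilon$ in each stable direction and $\le C\epsilon\,|\lambda_j|^{-(n-1)}$ in each unstable eigendirection~$v_j$; polynomial-in-$n$ Jordan corrections are harmlessly absorbed into~$C$.

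Set the sub-scale $r_n:=\epsilon\,|\lambda_1|^{-(n-1)}$, which is the smallest side length appearing above. A direct axis-by-axis count shows that each Bowen ball is covered by at most
\[
 C_0\,|\lambda_1|^{(n-1)d^s}\prod_{|\lambda_j|>1}\bigl(|\lambda_1|/|\lambda_j|\bigr)^{n-1} \;=\; C_0\,e^{(n-1)(d\log|\lambda_1|-h_{\textrm{top}}(T))}
\]
Euclidean balls of radius~$r_n$, where $d^s=d-d^u$ is the number of stable directions and we used $h_{\textrm{top}}(T)=\sum_{|\lambda_j|>1}\log|\lambda_j|$. On the other hand, for any $t<\dim F$ one has $\mathcal{H}^t(F)>0$, and hence a constant $c=c(t,F)>0$ such that every cover of~$F$ by Euclidean balls of radius $r\le r_0$ has cardinality at least $c\,r^{-t}$. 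Applying this lower bound to the cover of~$F$ inherited from a minimal $(n,\epsilon)$-spanning set gives
\[
 r_n(\epsilon,F)\cdot C_0\,e^{(n-1)(d\log|\lambda_1|-h_{\textrm{top}}(T))} \;\ge\; c\,r_n^{-t} \;=\; c\,\epsilon^{-t}\,|\lambda_1|^{t(n-1)}.
\]
Taking $\tfrac1n\log$, then $\limsup_n$, then $\epsilon\to 0^+$, and finally $t\to(\dim F)^-$ yields the desired inequality.

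The crux is the covering estimate in the first displayed formula. Although a Bowen ball has very small total volume because of the thinness $|\lambda_j|^{-(n-1)}$ in each unstable direction, it still extends to distance~$\epsilon$ in each stable direction, so at the sub-scale~$r_n$ each stable direction already requires~$|\lambda_1|^{n-1}$ small balls. Balancing the true expansion rates against the worst rate $\log|\lambda_1|$ is precisely what produces the deficit $(d-\dim F)\log|\lambda_1|$ in the bound. Bookkeeping items --- Jordan blocks, converting between axis-aligned boxes in the eigenbasis and Euclidean balls in the torus metric, and choosing $\epsilon$ below the injectivity radius of~$\TT^d$ --- contribute only multiplicative constants that vanish after dividing by~$n$ and letting $\epsilon\to 0$.
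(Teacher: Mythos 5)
Your proposal is correct and follows essentially the same route as the paper: estimate Bowen balls by axis-aligned boxes adapted to the eigenbasis of $T$, tile or cover each box by small balls of the worst-case scale $\sim \epsilon|\lambda_1|^{-(n-1)}$, and combine with a dimension-based lower bound on the number of small balls needed to cover $F$. The only cosmetic differences are that you invoke the positivity of Hausdorff content $\mathcal{H}^t_\infty(F)$ directly for $t<\dim F$ where the paper goes through lower box dimension and then uses $\underline{\dim}_B\ge\dim_H$, and that you dismiss the Jordan-block corrections as polynomial-in-$n$ (harmless after $\tfrac1n\log$) where the paper absorbs them via the $|\lambda|\mapsto|\lambda|-\tilde\delta$ shift; both handlings are valid.
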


\subsection{Proof of Proposition~\ref{propLowerEntropyBnd}}  \label{subsecProofthmLowerEntropyBnd}   We follow the standard proof for computing the topological entropy of toral endomorphisms (see, for example, \cite[Proposition~2.6.2]{BS}), but with changes to accommodate the set $F$.  Let $t_b$ be the lower box dimension of $F$.  Then $\RR^d= \bigoplus V_\lambda  \oplus \bigoplus V_{\lambda, \bar{\lambda}}$ where $V_\lambda$ is the generalized\footnote{If the eigenvalue $\lambda$ is real then the generalized eigen\-space is simply the maximal subspace on which~$\lambda$ is the only eigenvalue --- allowing Jordan blocks. If the eigenvalue $\lambda$ is complex we take the corresponding sum of the generalized eigenspaces $V_\lambda+V_{\bar{\lambda}}\subseteq\CC^d$ and intersect it with~$\RR^d$.} eigenspace for $\lambda \in \RR$ and $V_{\lambda, \bar{\lambda}}$ for $\lambda \in \CC \backslash \RR$.  Since there is no need to distinguish between generalized eigenspaces for real and non-real eigenvalues, we index them as follows:  \[\RR^d = \bigoplus_{i=1}^m V_i,\] where the indices $1, \cdots, k$ correspond to the expanding generalized eigenspaces and $k+1, \cdots, m$ to the contracting such that the corresponding eigenvalues for each generalized eigenspace are ordered \begin{eqnarray}\label{eqnOrderEigenvalues}|\lambda_1| \geq \cdots \geq |\lambda_k| > 1 >|\lambda_{k+1}| \geq \cdots \geq |\lambda_m| >0.\end{eqnarray}  Let $d_i = \dim(V_i)$ and let $d_{\textrm{con}}$ denote the sum of the dimensions of the contracting generalized eigenspaces---note that $d_{\textrm{con}}$ may be equal to $0$.  We use $T$ to denote both the linear map on $\RR^d$ and its induced map on $\TT^d$, relying on context to distinguish the two maps.

For each generalized eigenspace $V_i$, pick an orthonormal basis and impose the sup norm  $\|\cdot\|_i$. The metric \[d(v,w) := \max(\|v_1-w_1\|_1, \cdots, \|v_m-w_m\|_m)\] for vectors $v = v_1 + \cdots + v_m$ and $w = w_1 + \cdots + w_m$ (where $v_i, w_i \in V_i$) is invariant under translations and hence induces a metric on $\TT^d$, which is also denoted by $d$ and is given by considering the distance between cosets in $\RR^d$.  To compute topological entropy, we also define \[d_n(v,w) := \max \{d(f^j(v), f^j(w)) \mid 0\leq j\leq n-1\}.\]

For one-dimensional $V_i$, the corresponding eigenvalues are real and $T (v_i) = \lambda v_i$ is contraction or expansion by $\lambda$.  For the other types of $V_i$, this type of behavior, roughly speaking, also holds (\cite[Lemma 2.6.3]{BS}):

\begin{lemm}\label{lemmAlmostUniExpansion} Let $\lambda$ be the eigenvalue for $V_i$.  Then for every $\tilde{\delta} >0$ there is a $C(\tilde{\delta})\geq 1$ such that \[C^{-1} (|\lambda| - \tilde{\delta})^n \|v\|_i \leq \|T^n(v)\|_i \leq C(|\lambda| + \tilde{\delta})^n \|v\|_i\] for every $n \in \NN$, every generalized eigenspace $V_i$, and every $v \in V_i$.
\end{lemm}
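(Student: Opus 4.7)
The plan is to reduce to the Jordan normal form of the restriction $A:=T|_{V_i}$ and to exploit Gelfand's spectral radius formula. Since $T$ is a surjective toral endomorphism, the associated integer matrix has nonzero determinant, so $A$ is invertible and has spectral radius $|\lambda|$ while $A^{-1}$ has spectral radius $|\lambda|^{-1}$. (If $\lambda$ is non-real, $V_i=V_\lambda+V_{\bar\lambda}$ intersected with $\RR^d$, and $A$ acts on this real subspace with complex eigenvalues of modulus $|\lambda|$; the spectral-radius statement is unchanged.)

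For the upper bound I would fix the operator norm $\|\cdot\|_{op}$ on $V_i$ induced by $\|\cdot\|_i$. By Gelfand's formula $\lim_n\|A^n\|_{op}^{1/n}=|\lambda|$, so for any $\tilde\delta>0$ there is $n_0$ with $\|A^n\|_{op}\leq(|\lambda|+\tilde\delta)^n$ for all $n\geq n_0$. Absorbing the finitely many small $n$ into a constant yields $C_1(\tilde\delta)\geq 1$ with $\|A^n v\|_i\leq C_1(|\lambda|+\tilde\delta)^n\|v\|_i$ for every $n\in\NN$ and every $v\in V_i$. Equivalently and more concretely, one may choose a (real) Jordan basis in which $A=R+N$, where $R$ has operator norm $|\lambda|$, commutes with the nilpotent $N$ of index at most $d_i$, and the binomial expansion
\[
 A^n=\sum_{k=0}^{d_i-1}\binom{n}{k}R^{n-k}N^k
\]
gives $\|A^n\|_{op}\leq p(n)|\lambda|^n$ for a polynomial $p$; since $p(n)(1+\tilde\delta/|\lambda|)^{-n}\to 0$, the polynomial factor can be absorbed into $C_1$.

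For the lower bound I would apply the same argument to $A^{-1}$. Given $\tilde\delta>0$, choose $\tilde\delta'>0$ so small that $(|\lambda|^{-1}+\tilde\delta')^{-1}\geq |\lambda|-\tilde\delta$, which is possible by continuity at $\tilde\delta'=0$. Gelfand applied to $A^{-1}$ then produces $C_2\geq 1$ with $\|A^{-n}w\|_i\leq C_2(|\lambda|^{-1}+\tilde\delta')^n\|w\|_i$ for all $n$ and $w$. Setting $w=A^n v$ and rearranging,
\[
 \|A^n v\|_i\geq C_2^{-1}(|\lambda|^{-1}+\tilde\delta')^{-n}\|v\|_i\geq C_2^{-1}(|\lambda|-\tilde\delta)^n\|v\|_i.
\]
Taking $C(\tilde\delta)=\max(C_1,C_2)$, and then the maximum of these constants over the finitely many generalized eigenspaces $V_i$, completes the proof.

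No step is really an obstacle. The two points requiring mild care are: (i) for a non-real eigenvalue $\lambda$, working on the real subspace $V_i$ rather than on a complex eigenspace, which is harmless because the spectral radius of $A$ on $V_i$ is still $|\lambda|$; and (ii) ensuring that $A$ is invertible so that the lower bound can be obtained from the upper bound applied to $A^{-1}$, which is exactly where the hypothesis that $T$ is an epimorphism (hence has no zero eigenvalue) enters.
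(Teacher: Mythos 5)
Your proof is correct, and since the paper does not supply its own argument here---it simply cites Brin--Stuck, Lemma 2.6.3---there is nothing to contrast against except the standard textbook route, which is exactly what you reproduce: the upper bound from the polynomial growth of powers of a Jordan block (equivalently Gelfand's formula), the lower bound by applying the upper bound to $A^{-1}$ after noting that a surjective toral endomorphism has a nonsingular integer matrix so that $A=T|_{V_i}$ is invertible with spectral radius $|\lambda|^{-1}$ for $A^{-1}$, and a final maximum over the finitely many $V_i$ to get a single constant. The only point worth flagging is cosmetic: as stated for \emph{every} $\tilde{\delta}>0$ the lower bound is vacuous or degenerate once $\tilde{\delta}\geq|\lambda|$, so one should read the lemma (as the paper does in its application) with $\tilde{\delta}$ small, e.g.\ $\tilde{\delta}<|\lambda|$, which is precisely the regime your choice of $\tilde{\delta}'$ by continuity at $0$ requires.
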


We need to pick a small enough positive $\tilde{\delta}$ to separate eigenvalues.  Choose 
\[\begin{array}{lr} 0 < \tilde{\delta}\leq \min\big{\{}\frac{|\lambda_k|-1}{4}, \frac{1-|\lambda_{k+1}|}{4} \big{\}} & \quad \textrm{ if } k < m, \\
 0 < \tilde{\delta}\leq \frac{|\lambda_k|-1}{4} & \quad \textrm{ if } k = m. 
\end{array}\]



The union of the orthonormal bases of all generalized eigenspaces is a basis for $\RR^d$.  Fix this basis.  A \textit{parallelepiped $P$ (with respect to this basis)} is a $d$-dimensional closed parallelepiped with edges parallel to the basis elements.  The \textit{center} of the parallelepiped $P$ is the point in $\RR^d$ that is the barycenter of $P$.  The next lemma tells us that a ball in the $d_n$ metric is, roughly, a parallelepiped in the $d$ metric.

\begin{lemm}\label{lemmCompareEntMetrictoIndMetric} 
	Let $n\geq 0$. Let $A$ denote a closed ball in the $d_n$ metric of radius $\varepsilon>0$ around the point $\boldsymbol{0}$ and $B:=B(\varepsilon,n)$ denote the closed parallelepiped around center $\boldsymbol{0}$ whose edges parallel to the basis vectors of $V_i$ are all equal in length (with respect to the $d$ metric)  to 
\begin{itemize}
	\item  $2 C (|\lambda_i| - \tilde{\delta})^{1-n} \varepsilon$ for $1 \leq i \leq k$ (i.e. the expanding eigenspaces) and
	\item $ 2 C\varepsilon$ for $k+1 \leq i \leq m$ (i.e. the contracting eigenspaces, if present).
\end{itemize}  Then $A \subset B.$
\end{lemm}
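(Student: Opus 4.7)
The plan is to read off the two bounds directly from the orbit condition defining $A$, using Lemma~\ref{lemmAlmostUniExpansion} to transfer the constraint on $T^j v$ back to a constraint on $v$. Throughout, write $v = v_1 + \cdots + v_m$ with $v_i \in V_i$, and note that because the metric $d$ was defined as the maximum of the component norms $\|\cdot\|_i$, membership in $A$ is equivalent to the conjunction of the component inequalities $\|T^j v_i\|_i \leq \varepsilon$ for all $0 \leq j \leq n-1$ and all $i$.

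For a contracting generalized eigenspace ($k+1 \leq i \leq m$), I would just specialize to $j = 0$: this immediately gives $\|v_i\|_i \leq \varepsilon \leq C\varepsilon$ since $C \geq 1$, which is exactly the half-edge bound needed to place $v_i$ inside the corresponding slab of $B(\varepsilon,n)$.

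For an expanding generalized eigenspace ($1 \leq i \leq k$), I would instead specialize to $j = n-1$ and apply the lower bound from Lemma~\ref{lemmAlmostUniExpansion}. Since $|\lambda_i| > 1 + 3\tilde{\delta}$ by the choice of $\tilde{\delta}$, the quantity $|\lambda_i| - \tilde{\delta}$ is positive, and the lemma yields
\[
\varepsilon \;\geq\; \|T^{n-1} v_i\|_i \;\geq\; C^{-1}(|\lambda_i| - \tilde{\delta})^{n-1}\|v_i\|_i,
\]
so $\|v_i\|_i \leq C(|\lambda_i| - \tilde{\delta})^{1-n}\varepsilon$, again half of the edge length prescribed for $B(\varepsilon,n)$ in the $V_i$-direction.

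Combining the two types of estimates shows that every $v \in A$ lies in $B(\varepsilon,n)$, which is exactly the claim. There is essentially no obstacle here beyond matching constants: the only thing to watch is that the contracting directions do not benefit from any exponential shrinkage in the bound (using $j=0$ is wasteful but sharp enough for the stated edge length $2C\varepsilon$), and that the factor of $C$ appearing in Lemma~\ref{lemmAlmostUniExpansion} is responsible for the prefactor $C$ in both edge lengths. No invariance of $V_i$ under $T$ beyond what Lemma~\ref{lemmAlmostUniExpansion} already encodes is needed, and the translation invariance of $d$ on $\TT^d$ means the computation on $\RR^d$ transfers without change to the quotient.
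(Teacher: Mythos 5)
Your proof matches the paper's argument essentially verbatim: the paper also works component-by-component in the generalized eigenspaces, uses $j=n-1$ together with the lower bound of Lemma~\ref{lemmAlmostUniExpansion} for the expanding $V_i$ (noting $|\lambda_i|-\tilde\delta>1$ so this is the sharpest choice of $j$), and uses $j=0$ with $C\geq 1$ for the contracting $V_i$. No discrepancies worth noting.
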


\begin{proof}
Since the action of $T$ respects the splitting into these generalized eigenspaces, we may consider each such eigenspace separately.  Let $v \in A \cap V_i$.  Then \[\max\bigg{\{} \|T^jv\|_i \;\bigg{\vert}\; 0 \leq j \leq n-1\bigg{\}} \leq \varepsilon.\] There are two cases:  the expanding and the contracting.   

Let $V_i$ be an expanding generalized eigenspace.  By Lemma~\ref{lemmAlmostUniExpansion}, we have \[C^{-1} (|\lambda_i| - \tilde{\delta})^{n-1} \|v\|_i \leq \varepsilon\] because $|\lambda_i| - \tilde{\delta} > 1$.

Now let $V_i$ be a contracting generalized eigenspace.  Setting $j=0$ we see
that
 \[ \|v\|_i \leq \varepsilon\leq C\varepsilon.\]

Applying the length constraints on $B$ gives the desired result.\end{proof} 

Choose a small $\varepsilon >0$ and define $\delta_n := C (|\lambda_1| - \tilde{\delta})^{1-n}\varepsilon$.  (Recall that $\lambda_1$ is the eigenvalue with largest absolute value.)  Let 
\begin{itemize}
	\item $N_F(\delta_n)$ be the minimal number of balls of radius $\delta_n$ needed to cover $F$ in the $d$ metric,
	\item $\widetilde{Cov}'_F(\varepsilon,n)$ be the minimal number of translated parallelepipeds of the same orientation and side lengths as $B(\varepsilon,n)$ 
	from Lemma~\ref{lemmCompareEntMetrictoIndMetric} needed to cover $F$,
	\item $\widetilde{Cov}_F(\varepsilon,n)$ be the minimal number of balls of radius $\varepsilon$ needed to cover $F$ in the $d_n$ metric, and 
	\item $Cov_F(\varepsilon,n)$ be the minimal number of sets, contained in $F$, of diameter $\leq \varepsilon$ needed to cover $F$ in the $d_n$ metric
\end{itemize}

Lemma~\ref{lemmCompareEntMetrictoIndMetric} has the following immediate corollary:

\begin{coro} \label{coroCovNest1}
$\widetilde{Cov}_F(\varepsilon,n) \geq \widetilde{Cov}'_F(\varepsilon,n).$
\end{coro}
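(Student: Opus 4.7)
The plan is to deduce the inequality as an essentially immediate consequence of Lemma~\ref{lemmCompareEntMetrictoIndMetric}, once we unwind the translation invariance of the dynamical metric $d_n$.

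First I would verify that $d_n$ is translation invariant on $\TT^d$. This is simply because $T$ lifts to a linear map $v \mapsto Av$ on $\RR^d$, so for any $c$ and any $j \geq 0$ we have $T^j(v+c) - T^j(w+c) = T^j(v) - T^j(w)$. In particular, taking the induced metric on $\TT^d$, the closed ball $B_{d_n}(p, \varepsilon)$ of radius $\varepsilon$ around a point $p$ coincides with the translate $p + B_{d_n}(\boldsymbol{0}, \varepsilon)$.

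Next I would apply Lemma~\ref{lemmCompareEntMetrictoIndMetric} to obtain $B_{d_n}(\boldsymbol{0}, \varepsilon) \subset B(\varepsilon, n)$, and then translate this inclusion by $p$ to get $B_{d_n}(p, \varepsilon) \subset p + B(\varepsilon, n)$. The set $p + B(\varepsilon, n)$ is a translated parallelepiped of the same orientation and side lengths as $B(\varepsilon, n)$, i.e.\ precisely the type of set counted by $\widetilde{Cov}'_F(\varepsilon, n)$.

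Finally, given a minimal cover of $F$ by $\widetilde{Cov}_F(\varepsilon, n)$ balls of radius $\varepsilon$ in the $d_n$ metric, replacing each such ball by the corresponding translated parallelepiped produces a cover of $F$ of the same cardinality by admissible parallelepipeds. Hence $\widetilde{Cov}'_F(\varepsilon, n) \leq \widetilde{Cov}_F(\varepsilon, n)$, as required. There is no real obstacle here: the only point that needs articulation is the translation invariance of $d_n$ on the torus, and everything else is a single application of the preceding lemma, which is why the statement is flagged as an \emph{immediate} corollary.
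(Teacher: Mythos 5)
Your proposal is correct and fills in exactly the argument the paper implicitly has in mind when it calls the inequality an ``immediate'' corollary of Lemma~\ref{lemmCompareEntMetrictoIndMetric}: the $d_n$-metric inherits translation invariance from the linearity of the lift of $T$ and the translation invariance of $d$, so every $d_n$-ball of radius $\varepsilon$ is a translate of the ball at $\boldsymbol{0}$ and hence sits inside a translate of the parallelepiped $B(\varepsilon,n)$, and any minimal $d_n$-ball cover of $F$ can therefore be replaced by a parallelepiped cover of the same cardinality. This is the same route the paper takes; no gap.
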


Likewise, we have

\begin{lemm} \label{lemmCovNest} $Cov_F(\varepsilon,n) \geq \widetilde{Cov}_F(\varepsilon,n).$
\end{lemm}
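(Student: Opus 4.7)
The plan is to produce, from any cover of~$F$ by subsets of small $d_n$-diameter, a cover of~$F$ by $d_n$-balls of radius~$\varepsilon$ of the same or smaller cardinality. This will immediately yield the claimed inequality.

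Concretely, suppose $\{A_j\}_{j=1}^{N}$ is a cover of~$F$ with $A_j \subset F$ and $d_n$-diameter $\leq\varepsilon$ for each~$j$, where $N = Cov_F(\varepsilon,n)$. For each~$j$, pick any point $x_j \in A_j$; this is possible because $A_j$ is nonempty (otherwise it could be dropped, contradicting minimality). Then, by definition of diameter, for every $y \in A_j$ one has $d_n(x_j,y)\leq\varepsilon$, so $A_j \subset B_{d_n}(x_j,\varepsilon)$, the closed $d_n$-ball of radius~$\varepsilon$ around~$x_j$. Consequently $\{B_{d_n}(x_j,\varepsilon)\}_{j=1}^{N}$ is a cover of~$F$ by $d_n$-balls of radius~$\varepsilon$, so $\widetilde{Cov}_F(\varepsilon,n)\leq N = Cov_F(\varepsilon,n)$.

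There is essentially no obstacle here: the argument is a standard diameter-versus-radius comparison. The only minor point to verify is that the minimal cover can be taken with nonempty cells (so that the choice of $x_j$ makes sense), which is immediate.
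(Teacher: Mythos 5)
Your proof is correct and takes essentially the same approach as the paper: both observe that any subset of $F$ of $d_n$-diameter $\leq\varepsilon$ is contained in a closed $d_n$-ball of radius $\varepsilon$, and convert a minimal cover of the first kind into a cover of the second kind of the same cardinality. You spell out the step of choosing a center $x_j\in A_j$ a bit more explicitly than the paper does, but the argument is the same.
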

\begin{proof}
Any subset of $F$ with diameter (in the $d_n$ metric) $\leq \varepsilon$ is contained in a closed ball of $\TT^d$ (in the $d_n$ metric) of radius $\varepsilon$.  Take a covering of $F$ with cardinality $Cov_F(\varepsilon)$ and put each element of this covering into a closed ball of radius $\varepsilon$.  Hence we obtain a covering by closed balls and the result is now immediate.
\end{proof}

Let $P, Q$ be (closed) parallelepipeds with respect to the basis.  Let the edges of $P$ be integer multiples lengths (in the $d$ metric) of the respective edges of $Q$ --- let $v_j$ be a basis vector and $\ell_j$ be the ratio of the side lengths of $P$ and $Q$ in the direction of $v_j$.  Then a \textit{tiling} of $P$ by $Q$ is a finite collection of translates of $Q$, $\{Q + \boldsymbol{v}:\boldsymbol{v}\in I\}$, such that 
\begin{enumerate}
	\item \[P = \bigcup_{\boldsymbol{v}\in I} Q + \boldsymbol{v} \quad \textrm{ and }\]
	\item \[(Q + \boldsymbol{v}) \cap (Q+ \boldsymbol{v'}) \quad \textrm{ is for any pair $ \boldsymbol{v}, \boldsymbol{v'}$ either empty or a complete face}.\]
\end{enumerate}  The cardinality of the tiling is $\prod_{j=1}^d \ell_j$.  If the integer multiple condition no longer holds for all edges, then one can generalize the notion of tiling as follows.  Given $P$ as above, and a parallelepiped $Q$ with respect to the basis, which has a translate $Q+\boldsymbol{w}$ contained in $P$, let \[\ell_j := \frac {\textrm{length of an edge of $P$ parallel to the basis vector } v_j} {\textrm{length of an edge of $Q$ parallel to the basis vector } v_j} \geq 1.\]  (Here both lengths are with respect to the $d$ metric.)  Then a \textit{tiling} of $P$ by $Q$ is a collection of translates of $Q$ with cardinality $\prod_{j=1}^d \lceil \ell_j \rceil$, $\{Q + \boldsymbol{v}\}$, such that condition (2) above holds and condition (1) is replaced with 
\[
P \subset \bigcup_{\boldsymbol{v}} Q + \boldsymbol{v}.
\]

Recall that $d_i = \dim(V_i)$ and $d_{\textrm{con}}$ is the sum of the dimensions of the contracting generalized eigenspaces. With these notions we obtain the following:

\begin{lemm} \[\Big\lceil(|\lambda_1| - \tilde{\delta})^{(n-1)}\Big\rceil^{d_{\textrm{con}}} \prod_{i = 1}^k \Bigg\lceil \Bigg(\frac{|\lambda_1| - \tilde{\delta}}{|\lambda_i| - \tilde{\delta}}\Bigg)^{(n-1)}\Bigg\rceil^{d_i} \widetilde{Cov}'_F(\varepsilon,n) \geq  N_F(\delta_n).\]
\end{lemm}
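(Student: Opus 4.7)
The plan is to start with an (almost) optimal cover of $F$ by translates of $B(\varepsilon,n)$ and refine each translate into a tiling by small "cubes" that are themselves closed balls of radius $\delta_n$ in the $d$ metric, then count.

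The key observation is that, with respect to the chosen basis and the sup-type metric $d$, a closed ball of radius $\delta_n$ in $\RR^d$ (and hence in $\TT^d$) is precisely a parallelepiped with respect to the basis whose edge in \emph{every} direction has length exactly $2\delta_n$. Call such a parallelepiped a $\delta_n$-cube. So producing a cover of $F$ by $N$ closed $d$-balls of radius $\delta_n$ is the same as producing a cover of $F$ by $N$ translates of a $\delta_n$-cube.

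Fix a cover of $F$ by $\widetilde{Cov}'_F(\varepsilon,n)$ translates of $B(\varepsilon,n)$. For each such translate, apply the generalized tiling construction from the preceding discussion with $P$ equal to this translate and $Q$ equal to a $\delta_n$-cube. The ratio of side lengths of $P$ to $Q$ in the direction of a basis vector of $V_i$ is
\[
\ell_i \;=\; \frac{2C(|\lambda_i|-\tilde{\delta})^{1-n}\varepsilon}{2\delta_n}
\;=\;\Bigl(\tfrac{|\lambda_1|-\tilde{\delta}}{|\lambda_i|-\tilde{\delta}}\Bigr)^{n-1}
\qquad(1\le i\le k),
\]
while for a basis vector of a contracting generalized eigenspace the ratio is
\[
\ell \;=\; \frac{2C\varepsilon}{2\delta_n}\;=\;(|\lambda_1|-\tilde{\delta})^{n-1}.
\]
(Both ratios are $\ge 1$ by \eqref{eqnOrderEigenvalues} and the choice of $\tilde{\delta}$, and the ratio equals $1$ in the direction of $V_1$.) The generalized tiling therefore covers each translate of $B(\varepsilon,n)$ by at most
\[
\bigl\lceil(|\lambda_1|-\tilde{\delta})^{n-1}\bigr\rceil^{d_{\textrm{con}}}
\prod_{i=1}^{k}\biggl\lceil\Bigl(\tfrac{|\lambda_1|-\tilde{\delta}}{|\lambda_i|-\tilde{\delta}}\Bigr)^{n-1}\biggr\rceil^{d_i}
\]
translates of the $\delta_n$-cube.

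Taking the union over the $\widetilde{Cov}'_F(\varepsilon,n)$ translates yields a cover of $F$ (pushed down to $\TT^d$ from $\RR^d$) by closed balls of radius $\delta_n$ in the $d$ metric whose cardinality is at most the product of the two factors above. By the definition of $N_F(\delta_n)$ as the minimum cardinality of such a cover, this gives the claimed inequality. There is no real obstacle here: the only points that require a line of care are that a $d$-ball of radius $\delta_n$ is literally a $\delta_n$-cube (so the refined cover really does count for $N_F(\delta_n)$) and that the generalized tiling notion already introduced tolerates the fact that the ratios $\ell_i$ need not be integers, which is handled by the ceilings in the statement.
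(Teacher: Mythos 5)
Your proof is correct and follows the same route as the paper: start from an optimal cover by translates of $B(\varepsilon,n)$, tile each by $\delta_n$-cubes (which are exactly $d$-balls of radius $\delta_n$), and count using the generalized tiling with ceilings. The paper states the tiling cardinality directly, whereas you spell out the side-length ratios $\ell_i$ explicitly, but the argument is the same.
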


\begin{proof}
Take a covering corresponding to $\widetilde{Cov}'_F(\varepsilon,n)$---this is a covering by parallelepipeds with side lengths given by the formulas in Lemma \ref{lemmCompareEntMetrictoIndMetric}.  A ball in the $d$ metric is also a parallelepiped with respect to the basis.  Pick an element $P$ of the covering.  Tile $P$ using such balls of radius $\delta_n$.  

The cardinality of this tiling is \[\Big\lceil(|\lambda_1| - \tilde{\delta})^{(n-1)}\Big\rceil^{d_{\textrm{con}}} \prod_{i = 1}^k \Bigg\lceil \Bigg(\frac{|\lambda_1| - \tilde{\delta}}{|\lambda_i| - \tilde{\delta}}\Bigg)^{(n-1)}\Bigg\rceil^{d_i}.\]

Tiling the other elements of the covering $\widetilde{Cov}'_F(\varepsilon,n)$ in the analogous way yields a covering of $F$ by balls (in the $d$ metric) of radius $\delta_n$.  The desired result is now immediate.
\end{proof}

The lemma has an immediate corollary:

\begin{coro}\label{coroCovNest2}  
\[2^d(|\lambda_1| - \tilde{\delta})^{d(n-1)} \prod_{i = 1}^k \frac 1 {(|\lambda_i| - \tilde{\delta})^{d_i(n-1)}} \widetilde{Cov}'_F(\varepsilon,n) \geq  N_F(\delta_n).\]
\end{coro}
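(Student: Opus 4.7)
The corollary follows from the lemma by elementary bookkeeping: we only need to strip the ceilings at the cost of a multiplicative constant. My plan is to exploit the standard estimate $\lceil x\rceil \leq x+1 \leq 2x$, valid for $x\geq 1$, and apply it to each of the ceiling factors appearing on the left-hand side of the lemma.

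The preliminary check is to verify that every quantity inside a ceiling is indeed $\geq 1$ (for $n\geq 1$). For the first ceiling $\lceil(|\lambda_1|-\tilde{\delta})^{n-1}\rceil$ this is immediate from our choice of $\tilde{\delta}$: since $\tilde{\delta}\leq (|\lambda_k|-1)/4$ we have $|\lambda_i|-\tilde{\delta}>1$ for every expanding eigenvalue, so in particular $|\lambda_1|-\tilde{\delta}>1$. For each of the ratio ceilings, the ordering $|\lambda_1|\geq |\lambda_i|$ from \eqref{eqnOrderEigenvalues} makes the ratio at least $1$, so raising to the power $n-1\geq 0$ preserves this.

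With that in hand, applying $\lceil x\rceil\leq 2x$ to each ceiling in turn replaces
\[
\lceil (|\lambda_1|-\tilde{\delta})^{n-1}\rceil^{d_{\textrm{con}}}\prod_{i=1}^{k}\bigl\lceil \bigl((|\lambda_1|-\tilde{\delta})/(|\lambda_i|-\tilde{\delta})\bigr)^{n-1}\bigr\rceil^{d_i}
\]
by
\[
 2^{d_{\textrm{con}}}(|\lambda_1|-\tilde{\delta})^{(n-1)d_{\textrm{con}}}\cdot \prod_{i=1}^{k} 2^{d_i}\bigl((|\lambda_1|-\tilde{\delta})/(|\lambda_i|-\tilde{\delta})\bigr)^{(n-1)d_i}.
\]
The powers of $2$ combine to $2^{d_{\textrm{con}}+\sum_{i=1}^k d_i}=2^d$, while the factors of $|\lambda_1|-\tilde{\delta}$ combine to $(|\lambda_1|-\tilde{\delta})^{(n-1)(d_{\textrm{con}}+\sum_{i=1}^k d_i)} = (|\lambda_1|-\tilde{\delta})^{d(n-1)}$, and the remaining denominators give exactly $\prod_{i=1}^{k}(|\lambda_i|-\tilde{\delta})^{-d_i(n-1)}$. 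Substituting back into the lemma yields the stated inequality.

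There is no real obstacle here: the argument is a one-line application of $\lceil x\rceil \leq 2x$ coupled with the bookkeeping identity $d_{\textrm{con}}+\sum_{i=1}^k d_i = d$, which is just the decomposition of $\RR^d$ into its generalized eigenspaces. The sole point to keep in mind is the elementary verification that the ceiling arguments are at least $1$, which is where the choice of $\tilde{\delta}$ (made earlier to separate the spectrum from the unit circle) is actually used.
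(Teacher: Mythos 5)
Your proof is correct and is the intended argument: the paper calls the corollary "immediate" from the preceding lemma, and your computation---checking each ceiling argument is at least $1$ (for $n\geq 1$), applying $\lceil x\rceil\leq 2x$, and using $d_{\textrm{con}}+\sum_{i=1}^k d_i=d$---is exactly the bookkeeping that makes it immediate.
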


Since the $d$ metric is induced by a norm and since all norms are equivalent on $\RR^d$, we have  by the definition of lower box dimension $t_b$ the following inequality for all sufficiently big~$n$:

\[\log(N_F(\delta_n)) \geq (t_b-\tilde\delta) \log(\frac 1 \delta_n).\] 

Applying Corollaries~\ref{coroCovNest1}~and~\ref{coroCovNest2} and Lemma~\ref{lemmCovNest} yields \begin{align*}Cov_F(\varepsilon,n) & \geq \frac 1 {\delta_n^{t_b-\tilde\delta}} 2^{-d}(|\lambda_1| - \tilde{\delta})^{-d(n-1)} \prod_{i = 1}^k (|\lambda_i| - \tilde{\delta})^{d_i(n-1)} \\ & \geq (C\varepsilon)^{-t_b+\tilde\delta} 2^{-d}(|\lambda_1| - \tilde{\delta})^{(t_b-d-\tilde\delta)(n-1)} \prod_{i = 1}^k (|\lambda_i| - \tilde{\delta})^{d_i(n-1)}.
\end{align*}

Applying the definition of topological entropy yields\begin{align*} h_{\textrm{top}}(T\mid_F) & = \lim_{\varepsilon \rightarrow 0^+} \limsup_{n \rightarrow \infty}\frac 1 n \log(Cov_F(\varepsilon,n))\\ & \geq (t_b-d-\tilde\delta) \log(|\lambda_1| - \tilde{\delta}) + \sum_{i=1}^k d_i \log(|\lambda_i| - \tilde{\delta}).
\end{align*}

Since this calculation holds for all $\tilde{\delta}$ small enough, we have that \begin{align*}h_{\textrm{top}}(T\mid_F) & \geq (t_b-d) \log(|\lambda_1|) + \sum_{i=1}^k d_i \log(|\lambda_i|) \\ & = h_{\textrm{top}}(T) - (d-t_b) \log(|\lambda_1|) .
\end{align*}

Since lower box dimension is greater than or equal to Hausdorff dimension, we have shown Proposition~\ref{propLowerEntropyBnd}.

\subsection{Averaging measures}\label{subsecAverMeas}

Let $X$ be either $\TT^d$ or $\Gamma \backslash G$.  A subset of $X$ is called {\em thick} (\cite{KW2}) if its intersection with any nonempty open set of $X$ has Hausdorff dimension equal to that of $X$ itself and called {\em winning} if it is winning in the sense of Schmidt games~\cite{Sch2} (or in the sense of the variations on Schmidt games~\cite{KW} and~\cite{Mc}).  The property of being thick is implied by (any of the variations on) winning and is the property that concerns us.\footnote{Since we only need to be aware of two properties of winning sets (that they have full Hausdorff dimension and that the winning property is preserved under taking countable intersections), we omit the definition, which was introduced in~\cite{Sch2} with later adaptations in~\cite{Mc} (and others).  A convenient summary of the theory of winning sets can be found in~\cite[Section~2.1]{ET}.}  In particular, the set of points with nondense orbits is thick (see~\cite{Da2}, \cite{T4}, and~\cite{BFK} for the toral case and~\cite{KM2} and~\cite{KW2}  for the Lie group case).

Let $T,S$ be the two commuting actions on $X$ and $x_0$ be a point of $X$.  Fix a sequence of open balls $U_q$ centered at $x_0$ and whose radius $\rightarrow 0$ as $q \rightarrow \infty$.  Define \[E(q) := E_{T, x_0}(q) := \{x \in X \mid \Or_T(x) \cap U_q = \emptyset\}\] where $\Or_T(x)$ denotes either the forward orbit for $T$ an endomorphism or the full orbit for $T$ an automorphism.  Note that $E(q)$ is a closed $T$-invariant set.  And the union $\cup_q E(q)$ is the subset of points whose orbit closures do not contain $x_0$.  The union has large Hausdorff dimension:

\begin{prop} The set $\bigcup_q E(q)$ is thick and, for the case of the torus, winning and, therefore,  $\bigcup_q E(q)$ has full Haudorff dimension.
\end{prop}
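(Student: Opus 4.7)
The plan is to identify $\bigcup_q E(q)$ with the single set of points whose $T$-orbit closure misses the target $x_0$, and then invoke winning/thickness results for such sets that already appear in the literature cited in the introduction.

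First I would observe that since the balls $U_q$ shrink to $\{x_0\}$, we have $E(q) \subseteq E(q+1)$, and
\[
\bigcup_q E(q) = \{x \in X : x_0 \notin \overline{\Or_T(x)}\}.
\]
Indeed, if $x_0 \notin \overline{\Or_T(x)}$ then some $U_q$ is disjoint from $\Or_T(x)$, so $x \in E(q)$; conversely, any $x \in E(q)$ has a neighborhood $U_q$ of $x_0$ that its orbit avoids, so $x_0 \notin \overline{\Or_T(x)}$.

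For the toral case, I would invoke the winning property of the set of points whose orbit misses a prescribed target: for a hyperbolic endomorphism or automorphism $T$ of $\TT^d$ and every $y \in \TT^d$, the set $\{x \in \TT^d : y \notin \overline{\Or_T(x)}\}$ is winning in the sense of Schmidt's game or one of its absolute/hyperplane variants. This is established in the works of Dani~\cite{Da2}, Tseng~\cite{T4}, and Broderick-Fishman-Kleinbock~\cite{BFK}. Taking $y = x_0$ gives the winning property of $\bigcup_q E(q)$.

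For the Lie group case, I would appeal to the analogous thickness statement of Kleinbock-Margulis~\cite{KM2} and Kleinbock-Weiss~\cite{KW2}: for a partially hyperbolic element $\theta_1$ acting on the compact homogeneous space $X = \Gamma \backslash G$ as in Theorem~\ref{thmCommGroupMapsLarNondenDen}, the set of points whose $\theta_1$-orbit avoids any prescribed $x_0 \in X$ is thick. Since winning implies thickness and any thick subset of $X$ has Hausdorff dimension equal to $\dim X$, the full Hausdorff dimension conclusion follows in both cases. The only obstacle I foresee is bookkeeping, namely confirming that the hyperbolicity hypothesis on $T$ (respectively the partial hyperbolicity of $\theta_1$) imposed in our theorems exactly matches the hypotheses of the cited winning/thickness theorems; because $\bigcup_q E(q)$ depends only on $T$ (or $\theta_1$) and not on $S$ (or $\theta_2$), this reduction requires no further dynamical input.
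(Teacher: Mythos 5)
Your proposal matches the paper's argument: the paper proves this proposition by directly citing \cite[Theorem~1.1]{BFK} for the toral case and \cite[Theorem~1.1]{KM2} for the homogeneous case, which are precisely the winning/thickness results for the set of points whose orbit avoids a prescribed target. Your preliminary identification of $\bigcup_q E(q)$ with $\{x : x_0\notin\overline{\Or_T(x)}\}$ is the (implicit) bookkeeping step the paper leaves to the reader.
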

\begin{proof}
Apply~\cite[Theorem~1.1]{BFK} and~\cite[Theorem~1.1]{KM2}.
\end{proof}

 The proposition implies that $\dim (E(q)) \rightarrow \dim (X)$ as $q \rightarrow \infty$.  Thus, applying also Proposition~\ref{propLowerEntropyBnd} shows that $\htop(T\mid_{E(q)}) \approx \htop(T)$ for $q$ large enough.  Next, an application of the variational principle shows that there exists a Borel probability measure $\nu$ (whose support lies in $E(q)$) invariant under $T$ such that $h_{\nu}(T \mid_{E(q)})$ is as close to $\htop(T)$ as we like, provided that we choose $q$ large enough.  

Let $\mu$ be the weak-$*$ limit along a subsequence of $N$s of the averaging measures\footnote{As an analogue to the result in~\cite{Rudolph-Johnson} we believe that the full sequence actually converge to the Haar measure but we neither need this statement nor do we have a proof.}
:\[\mu_N:= \frac 1 N \sum_{n=0}^{N-1} S_*^n \nu.\] Any such measure $\mu$ is $S$-invariant.

\begin{lemm}\label{lemm-T-inv}  The measure $\mu$ is $T$-invariant.
\end{lemm}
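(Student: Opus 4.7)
The plan is very short: exploit the commutativity of $T$ and $S$ together with the $T$-invariance of $\nu$. Since $T$ and $S$ commute, their induced pushforwards on Borel probability measures also commute, i.e.\ $T_* S_*^n = S_*^n T_*$ for every $n \geq 0$. Applying $T_*$ to the averaging measure gives
\[
T_* \mu_N = \frac{1}{N}\sum_{n=0}^{N-1} T_* S_*^n \nu = \frac{1}{N}\sum_{n=0}^{N-1} S_*^n T_* \nu,
\]
and because $\nu$ was chosen to be $T$-invariant (as a measure realizing the variational principle for $T|_{E(q)}$), we have $T_*\nu = \nu$, hence $T_*\mu_N = \mu_N$ for every $N$.

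The second step is simply to pass to the weak-$*$ limit. Since $T:X\to X$ is continuous and $X$ is compact, the pushforward operator $T_*$ is continuous with respect to the weak-$*$ topology on the space of Borel probability measures. Therefore, along the chosen subsequence $N_j\to\infty$ with $\mu_{N_j}\to\mu$, we get $T_*\mu_{N_j}\to T_*\mu$, while the left-hand side equals $\mu_{N_j}\to\mu$. Uniqueness of weak-$*$ limits yields $T_*\mu = \mu$, which is the desired $T$-invariance.

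There is really no obstacle here; the only point to keep in mind is to verify that the $\nu$ produced in the previous paragraph is genuinely $T$-invariant (which it is, by construction via the variational principle applied to the closed $T$-invariant set $E(q)$). The analogous argument will apply verbatim in the Lie group setting, since there $\theta_1$ and $\theta_2$ also commute and the relevant spaces $X=\Gamma\backslash G$ are compact.
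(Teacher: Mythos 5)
Your proof is correct and follows the same approach as the paper: use commutativity of $T$ and $S$ to show each $S_*^n\nu$ (hence each convex combination $\mu_N$) is $T$-invariant, then pass to the weak-$*$ limit using continuity of $T_*$. You have simply written out the details that the paper's one-line proof leaves implicit.
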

\begin{proof}
By commutativity, the measures $S_*^n \nu$ are $T$-invariant, the same holds for the convex
combination~$\mu_N$ and therefore also for the limit $\mu$.
\end{proof}

\begin{lemm}\label{lemmTopMeasEntClose} By choosing $q$ large enough, we can have $h_\mu(T)$ as close to $\htop(T)$ as we like.
\end{lemm}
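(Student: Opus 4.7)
The plan is to exploit three properties of the Kolmogorov--Sinai entropy functional: affinity in the measure argument, invariance under push-forward by the commuting map $S$, and upper semicontinuity at the weak-$*$ limit. For the non-invertible setting of Theorem~\ref{thmCommToralMapsLarNondenDen2}, the whole argument of Section~\ref{secAGHM} is carried out in parallel on the solenoid natural extension, on which $\hat T$ and $\hat S$ are commuting automorphisms and $h_{\textrm{top}}(\hat T)=h_{\textrm{top}}(T)$; I describe the argument in the invertible case below.

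Fix $\eta>0$. Combining the proposition preceding Lemma~\ref{lemm-T-inv}, which gives $\dim(E(q))\to\dim(X)$ as $q\to\infty$, with Proposition~\ref{propLowerEntropyBnd} yields $h_{\textrm{top}}(T\mid_{E(q)})>h_{\textrm{top}}(T)-\eta$ for all $q$ sufficiently large. Fixing such a $q$, the variational principle applied to the restriction of $T$ to the compact invariant set $E(q)$ furnishes a $T$-invariant Borel probability $\nu$ supported in $E(q)$ with $h_\nu(T)=h_\nu(T\mid_{E(q)})>h_{\textrm{top}}(T)-2\eta$.

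Since $S$ is a homeomorphism commuting with $T$, each iterate $S^n$ is a measure-theoretic isomorphism between the systems $(\TT^d,T,\nu)$ and $(\TT^d,T,S^n_*\nu)$ intertwining $T$ with itself, so that $h_{S^n_*\nu}(T)=h_\nu(T)$ for every $n$. By affinity of entropy on the simplex of $T$-invariant Borel probabilities,
\[
 h_{\mu_N}(T)\;=\;\frac{1}{N}\sum_{n=0}^{N-1} h_{S^n_*\nu}(T)\;=\;h_\nu(T)\qquad\text{for every }N.
\]
Finally, since every hyperbolic toral automorphism is expansive, the classical theorem on upper semicontinuity of entropy for expansive maps implies that $\mu\mapsto h_\mu(T)$ is upper semicontinuous on the weak-$*$ compact space of $T$-invariant Borel probabilities. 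Applying this to the convergent subsequence $\mu_{N_k}\to\mu$ gives
\[
 h_\mu(T)\;\geq\;\limsup_{k\to\infty} h_{\mu_{N_k}}(T)\;=\;h_\nu(T)\;>\;h_{\textrm{top}}(T)-2\eta.
\]
Since $h_\mu(T)\leq h_{\textrm{top}}(T)$ by the variational principle on $\TT^d$ and $\eta>0$ was arbitrary, the lemma follows.

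The step I anticipate to be the most delicate is the non-invertible case of Theorem~\ref{thmCommToralMapsLarNondenDen2}: on $\TT^d$ itself the map $S^n$ is no longer an isomorphism, so $h_{S^n_*\nu}(T)=h_\nu(T)$ is not directly available, and expansiveness of $T$ can fail if there are contracting eigenvalues, so upper semicontinuity of entropy on the torus is not automatic. The standard remedy is to perform the entire construction---$\nu$, $\mu_N$, and the weak-$*$ limit $\mu$---on the solenoid natural extension, on which both the isomorphism property of $\hat S^n$ and the expansiveness of $\hat T$ are restored; the subsequent sections of the paper then proceed with $\hat\mu$ on the solenoid and deduce the stated Hausdorff dimension bound on $\TT^d$ by projection.
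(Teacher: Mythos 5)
Your proof is correct and reaches the same conclusion, but the central step is handled differently from the paper, and the differences are worth noting.

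For the identity $h_{S^n_*\nu}(T)=h_\nu(T)$, you use that $S^n$ is a measure-theoretic \emph{isomorphism} intertwining $T$ with itself (valid when $S$ is an automorphism), and you then propose lifting the whole construction to the solenoid natural extension in the non-invertible setting of Theorem~\ref{thmCommToralMapsLarNondenDen2}. The paper instead observes that $S^n$ is a \emph{finite-to-one factor map} from $(\TT^d,T,\nu)$ to $(\TT^d,T,S^n_*\nu)$, and invokes the general fact that finite-to-one factors preserve entropy; this single argument covers the automorphism and the surjective-endomorphism case uniformly on $\TT^d$ without passing to the solenoid. Your route is more elementary at the cost of a case split, while the paper's is more uniform at the cost of quoting a slightly less standard lemma. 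Both are valid, and the solenoid detour you sketch is essentially the same one the paper already performs in the proof of Corollary~\ref{coroMeasureRigidity}, so it fits the framework.

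Two small caveats. First, you justify upper semicontinuity of $\mu\mapsto h_\mu(T)$ via expansiveness of the hyperbolic toral automorphism. This is fine for the torus, but the lemma is stated in Section~\ref{subsecAverMeas} for both $X=\TT^d$ and $X=\Gamma\backslash G$, and in the homogeneous case $\theta_1$ is only partially hyperbolic and not expansive; there one needs asymptotic $h$-expansiveness of smooth (indeed algebraic) maps, or the explicit uniform structure of the central foliation, rather than expansiveness per se. The paper simply asserts upper semicontinuity, so you are being more explicit than the source, but the explicit reason you give does not cover all cases to which the lemma is applied. Second, your parenthetical that the paper uses convexity where you use affinity is purely terminological: the equality $h_{\mu_N}(T)=\frac1N\sum_n h_{S^n_*\nu}(T)$ is exactly the affineness of entropy, which is what the paper means as well.
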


\begin{proof}
	As is well known, a factor map may only decrease entropy. This shows that $h_{S^n\nu}(T)\leq h_\nu(T)$. On the other hand, $S^n$ is a finite-to-one factor map from which one sees that in fact $h_{S^n\nu}(T)=h_\nu(T)$. Due to the convexity of entropy this implies that $h_{\mu_N}(T)=h_\nu(T)$. Finally, it follows by the upper semicontinuity of measure-theoretic entropy that $h_\mu (T)\geq h_\nu(T)$, and, consequently, we can have $h_\mu(T)$ as close to $\htop(T)$ as we like.
\end{proof}

Let $m_X$ denote the probability Haar measure on $X$.  

\begin{prop}\label{PropErgCompHaar}The probability Haar measure $m_X$ is an ergodic component of $\mu$
	(of positive proportion).
\end{prop}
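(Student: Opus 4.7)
The plan is to use the $\ZZ^2$-ergodic decomposition of $\mu$ together with the high-entropy measure rigidity provided by Corollary~\ref{coroMeasureRigidity}. Since $\mu$ is invariant under both $T$ and $S$ (by construction and by Lemma~\ref{lemm-T-inv}), it is invariant under the $\ZZ^2$-action they generate and admits an ergodic decomposition
\[
\mu = \int \mu_\omega \, dP(\omega),
\]
where each $\mu_\omega$ is a $T,S$-invariant and $\ZZ^2$-ergodic probability measure on $X$.

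Next, I would exploit the fact that measure-theoretic entropy is affine in the measure, so
\[
h_\mu(T) = \int h_{\mu_\omega}(T) \, dP(\omega).
\]
Let $\kappa \in (0,1)$ be the constant from Corollary~\ref{coroMeasureRigidity}, and consider
\[
A := \big\{\omega : h_{\mu_\omega}(T) > \kappa \, \htop(T)\big\}.
\]
Since $h_{\mu_\omega}(T) \leq \htop(T)$ always holds, splitting the integral at the threshold $\kappa\,\htop(T)$ yields
\[
h_\mu(T) \;\leq\; \htop(T)\, P(A) + \kappa\,\htop(T)\,(1-P(A)) \;=\; \htop(T)\bigl(\kappa + (1-\kappa)P(A)\bigr).
\]
By Lemma~\ref{lemmTopMeasEntClose}, choosing $q$ large enough forces $h_\mu(T) > \kappa\,\htop(T)$, which in turn forces $P(A) > 0$.

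Finally, for each $\omega \in A$ the measure $\mu_\omega$ is a $T,S$-invariant, $\ZZ^2$-ergodic probability measure on $X$ with $h_{\mu_\omega}(T) > \kappa\,\htop(T)$. Corollary~\ref{coroMeasureRigidity} then applies and gives $\mu_\omega = m_X$. Hence the ergodic decomposition of $\mu$ contains the single atom $m_X$ with mass $P(A) > 0$, which is exactly the statement of the proposition.

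The conceptual obstacle is ensuring that the averaging procedure producing $\mu$ preserves enough entropy under $T$ to place us in the high-entropy regime of the measure rigidity theorem; that has been arranged already in Lemma~\ref{lemmTopMeasEntClose}. The only subtle technical point is that the ergodic decomposition must be taken with respect to the full $\ZZ^2$-action (rather than for $T$ alone), so that each component $\mu_\omega$ is eligible input for Corollary~\ref{coroMeasureRigidity}; this uses the commutativity of $T$ and $S$ in an essential way.
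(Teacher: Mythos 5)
Your proof is correct and follows the same route as the paper: take the ergodic decomposition of $\mu$ with respect to the joint $\ZZ^2$-action generated by $T$ and $S$, use affinity of measure-theoretic entropy in the measure to conclude that a positive-$P$-measure set of ergodic components has entropy above the rigidity threshold, and then invoke Corollary~\ref{coroMeasureRigidity} to identify each such component with $m_X$. The only cosmetic difference is the thresholding step: you split the integral at $\kappa\,\htop(T)$ and run the arithmetic, whereas the paper observes directly that the set $\{h_{\mu_x^{\mathcal{E}}}(T)\geq h_\mu(T)\}$ has positive measure and notes that $h_\mu(T)>\kappa\,\htop(T)$ already places every such component in the high-entropy regime. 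These are two phrasings of the same estimate, and both arguments use the commutativity of $T$ and $S$ in the same essential way, namely to make the $\ZZ^2$-ergodic decomposition (and hence Corollary~\ref{coroMeasureRigidity}) available.
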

\begin{proof}
Let $\mathcal{E}$ be the $\sigma$-algebra of jointly $T$- and $ S$-invariant Borel subsets of $X$. 
The ergodic decomposition of $\mu$ with respect to the joint action can be obtained for instance via the decomposition of measures
\[\mu=\int_X\mu_x^{\mathcal{E}}d\mu(x)\] for the $\sigma$-algebra $\mathcal{E}$. Convexity of entropy gives
\[h_\mu(T)=\int_X h_{\mu_x^{\mathcal{E}}}d\mu(x).\] This shows that for a positive proportion of $x\in X$
we must have $h_{\mu_x^{\mathcal{E}}}(T)\geq h_\mu(T)$. If $h_\mu(T)$ is sufficiently close to $h_{\textrm{top}}(T)$,
we may apply measure rigidity (i.e. Corollary~\ref{coroMeasureRigidity}) for each such~$\mu_x^\mathcal{E}$ and obtain the proposition.
\end{proof}

\subsection{The Lie group case}\label{subsecLieGroupCase}  We now show how to derive the same results for the Lie group case.  Let $G, \Gamma,$ and $X$ be as in Theorem \ref{thmCommGroupMapsLarNondenDen}. Explicitly,  $G$ is a semisimple, real Lie group such that each factor has $\RR$-rank $\geq 2$, and $\Gamma$ is a lattice of $G$. 
Assume furthermore that $\a:\ZZ^2\rightarrow G$ is the parametrization of
a subgroup of a maximal Cartan subgroup of $G$ that projects injectively and discretely to each simple factor. We will identify $\alpha$ also with the induced action on the right of $X = \Gamma \backslash G$. We write $\a^{\boldsymbol{t}}$ for the action of an individual 
element of a two-dimensional subgroup of the Cartan subgroup where $\boldsymbol{t} \in \ZZ^2$. In this situation we can 
use~\cite[Theorem 2.4]{EK} which contains in particular the following result.

\begin{theo}[Measure rigidity] \label{theoMeasureRigidityII} Let $G$, $\Gamma$, and $\a$ be as above and 
	$\boldsymbol{t}\neq 0$ be a fixed element.  If $\mu$ is an $\a$-invariant ergodic probability measure on $\Gamma 
	\backslash G$ with $h_\mu(\a^{\boldsymbol{t}})$ close enough to $h_{\textrm{top}}(\a^{\boldsymbol{t}})$, then $\mu$ 
	is the Haar measure on $X$.
\end{theo}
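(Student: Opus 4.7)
The plan is to run the same argument as in Corollary~\ref{coroMeasureRigidity}, using \cite[Theorem~2.4]{EK} as the measure rigidity input in place of Theorem~\ref{thm: ELmain}. The measure rigidity theorem in \cite{EK} should produce, from an $\alpha$-invariant ergodic measure with large entropy relative to a single element $\alpha^{\boldsymbol{t}}$, a decomposition whose ergodic pieces are invariant under translation by an $\alpha$-normalized closed subgroup $H < G$, and for which the quotient system on $\Gamma\backslash G / H$ (along a finite-index sublattice of $\ZZ^2$) has zero entropy. The goal is to combine this with an entropy calculation to force $H = G$, in which case the measure is Haar.

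First I would make the entropy-rigidity threshold precise. Root space decompose $\mathfrak{g} = \mathfrak{h} \oplus \bigoplus_\beta \mathfrak{g}_\beta$ for the Cartan subalgebra containing $\alpha(\ZZ^2)$, so that
\[
h_{\textrm{top}}(\alpha^{\boldsymbol{t}}) \;=\; \sum_{\beta} \max\bigl(0,\beta(\boldsymbol{t})\bigr)\, \dim \mathfrak{g}_\beta,
\]
where $\beta$ ranges over the restricted roots. Define
\[
\kappa \;=\; \max_{\mathfrak{h}'}\,\frac{\sum_{\beta} \max(0,\beta(\boldsymbol{t}))\, \dim(\mathfrak{g}_\beta\cap\mathfrak{h}')}{\sum_{\beta} \max(0,\beta(\boldsymbol{t}))\, \dim \mathfrak{g}_\beta},
\]
where the maximum is over proper $\mathrm{Ad}(\alpha(\ZZ^2))$-invariant Lie subalgebras $\mathfrak{h}' \subsetneq \mathfrak{g}$ that can arise as the Lie algebra of a possible stabilizer subgroup in \cite[Theorem~2.4]{EK}. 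The hypothesis that $\alpha(\ZZ^2)$ projects injectively and discretely to each simple factor and that every simple factor has $\RR$-rank $\geq 2$ should force $\kappa < 1$: any proper Lie subalgebra invariant under two generic commuting Cartan elements must miss some root space with positive weight under $\boldsymbol{t}$ in each simple factor, so the numerator is a strict proper sum.

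Now I would suppose $h_\mu(\alpha^{\boldsymbol{t}}) > \kappa\, h_{\textrm{top}}(\alpha^{\boldsymbol{t}})$ and apply \cite[Theorem~2.4]{EK} to the ergodic measure $\mu$. Pass to a finite-index sublattice $\Lambda \subset \ZZ^2$ containing some power $(N\boldsymbol{t})$ and to an ergodic component $\mu_1$ for $\alpha_\Lambda$ such that $h_{\mu_1}(\alpha^{N\boldsymbol{t}}) \geq N h_\mu(\alpha^{\boldsymbol{t}})$; then $\mu_1$ is invariant under some closed subgroup $H$ with Lie algebra $\mathfrak{h}'\subsetneq \mathfrak{g}$ (if $H \neq G$), and the induced measure on $X/H$ has zero entropy under the induced action. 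The Abramov-Rokhlin addition formula then gives
\[
h_{\mu_1}(\alpha^{N\boldsymbol{t}}) \;=\; h_{\mu_1}\bigl(\alpha^{N\boldsymbol{t}}\mid \mathcal{B}_{X/H}\bigr),
\]
and the standard upper bound for conditional entropy via the positive Lyapunov exponents of the restriction of $\mathrm{Ad}(\alpha^{N\boldsymbol{t}})$ to $\mathfrak{h}'$ yields
\[
h_{\mu_1}(\alpha^{N\boldsymbol{t}}) \;\leq\; \sum_\beta \max\bigl(0, N\beta(\boldsymbol{t})\bigr)\, \dim(\mathfrak{g}_\beta\cap\mathfrak{h}') \;\leq\; \kappa\, N\, h_{\textrm{top}}(\alpha^{\boldsymbol{t}}),
\]
contradicting the entropy assumption on $\mu$. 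Thus $H = G$ and $\mu = m_X$.

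The main obstacle, as I see it, is verifying that $\kappa < 1$ in this full generality, in particular making sure that the finite-index issue and the possible non-normal nature of the stabilizer $H$ (it need only be invariant under a sublattice $\Lambda$) do not allow $\mathfrak{h}'$ to cover the roots that contribute to $h_{\textrm{top}}(\alpha^{\boldsymbol{t}})$. This is exactly the role of the hypotheses that every simple factor has $\RR$-rank at least two and that $\boldsymbol{t}$ is nonzero together with the injectivity assumption on $\alpha$: the first ensures that rigidity in \cite{EK} triggers at all, while the latter ensures that $\boldsymbol{t}$ projects nontrivially to every simple factor, so no proper Cartan-invariant subalgebra can absorb all of the expanding directions of $\alpha^{\boldsymbol{t}}$. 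Once this combinatorial point is secured, the rest is a direct transcription of the toral argument.
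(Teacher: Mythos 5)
The paper does not give a proof of Theorem~\ref{theoMeasureRigidityII}; the sentence immediately before it reads ``we can use~\cite[Theorem~2.4]{EK} which contains in particular the following result.'' In other words, the paper treats this theorem as a direct special case of the Einsiedler--Katok high-entropy theorem, and the only work done is to verify that the hypotheses (semisimple $G$, each factor of rank $\geq 2$, the two-dimensional diagonal subgroup projecting injectively and discretely to each simple factor) place the action in the regime covered by \cite{EK}. Your proposal instead tries to \emph{re-derive} the statement by imposing on \cite[Theorem~2.4]{EK} a decomposition structure modeled on the solenoid theorem (Theorem~\ref{thm: ELmain}) and then running the entropy/Abramov--Rokhlin argument from Corollary~\ref{coroMeasureRigidity}. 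That is a genuinely different route, and it is not needed here: in the homogeneous setting \cite[Theorem~2.4]{EK} already \emph{is} a high-entropy-implies-Haar statement, not a decomposition theorem that needs to be fed into an entropy calculation.

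Independently of whether this extra work is called for, your reconstruction has a concrete gap at the step you flag as the ``main obstacle,'' namely $\kappa<1$. You assert that ``any proper Lie subalgebra invariant under two generic commuting Cartan elements must miss some root space with positive weight under $\boldsymbol{t}$,'' but that is false as stated: a parabolic subalgebra (for instance the upper-triangular matrices in $\mathfrak{sl}_3(\RR)$) is a proper Lie subalgebra, is $\operatorname{Ad}$-invariant under the full Cartan, and contains \emph{all} positive root spaces, so it would give $\kappa=1$ in your formula. To make your argument work you would have to pin down precisely which closed subgroups $H$ can occur as translation-invariance groups in \cite[Theorem~2.4]{EK} --- in particular that they must be (products of) normal factors rather than arbitrary $\alpha$-normalized subgroups --- and this is exactly the content you would be borrowing from \cite{EK} anyway. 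The remark the paper places after the theorem (about $\RR$-rank two, $\operatorname{SL}_2(\RR)^3$, and the references \cite{EL2}, \cite{EL3}) shows that these applicability questions are genuinely delicate and that not every configuration is covered by \cite{EK}; your argument does not engage with these case distinctions.
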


Let $\tilde{\lambda}$ be a nonzero root of $G$ with respect to the Cartan subgroup that contains the image of $\a$.  The root $\tilde{\lambda}$ can be expressed as a linear map on the Lie algebra of the Cartan subgroup which we may identify with the Cartan subgroup.  Let $\g_{\tilde{\lambda}}$ be the root space corresponding to $\tilde{\lambda}$.    Then the adjoint action of  $\a^{\boldsymbol{t}}$ on $\g_{\tilde{\lambda}}$ is multiplication by the eigenvalue $\lambda:=e^{\tilde{\lambda}({\boldsymbol{t}})}$.

Fixing the element $\a^{\boldsymbol{t}}$, we can order the absolute value of the eigenvalues from each of the roots to reproduce (\ref{eqnOrderEigenvalues})---which, recall, is counted with multiplicity.  For the Lie group $G$, the action is only partially hyperbolic---in particular, its Lie algebra is $\mathfrak {g} = \mathfrak {h}^+ \oplus \mathfrak{h}^0 \oplus \mathfrak{h}^-$, and, on those subgroups corresponding to the direct summands of $\g$, the action expands,  stays isometric, or contracts, respectively.  As in the toral case, only the expanding directions, which correspond to the roots for which $|\lambda|>1$, contribute to the entropy.  It is well-known that the topological entropy (or the entropy with respect to Haar measure) is the sum of the logarithms of the absolute values of these expanding eigenvalues counted with multiplicity according to the real dimension of their corresponding  eigenspaces for the adjoint representation:  \[h_{\textrm{top}}(\a^{\boldsymbol{t}})= \sum_{i=1}^k \log|\lambda_i| \dim(g_{{\lambda}_i})= \sum_{\tilde{\lambda}(\boldsymbol{t})>0} \tilde{\lambda}(\boldsymbol{t}) \dim(g_{\tilde{\lambda}}).\] 

Let $a_1, a_2, \theta_1, \theta_2$ be as in Theorem~\ref{thmCommGroupMapsLarNondenDen}.  We 
may define the subgroup~$\a$ in such a way that $\theta_1 = \a(\boldsymbol{e}_1)$ and $\theta_2 = \a(\boldsymbol{e}_2)$.  Using the exponential map, which is a local diffeomorphism and bi-Lipschitz in a neighborhood of the origin, we see that the proof of Proposition~\ref{propLowerEntropyBnd}  for the torus case is also valid for the Lie group case (with $T$ replaced by $\theta_1$). In fact, the proof of Proposition~\ref{propLowerEntropyBnd}
is slightly easier in the Lie group case as the elements of the Cartan subgroup are diagonalizable on the Lie
algebra with real eigenvalues, hence we may work with real eigenspaces instead of generalized eigenspaces as in the torus case. Now Section~\ref{subsecAverMeas} is also valid for the Lie group case, provided that one replaces $T$ by $\theta_1$ and $S$ by $\theta_2$ and uses the correct version of measure rigidity, namely Theorem~\ref{theoMeasureRigidityII}.

\begin{rema}
	 If~$G$ has precisely $\RR$-rank two, $\Gamma$ is irreducible, and $\alpha$ parametrizes the Cartan subgroup, then an analogue of Theorem~\ref{theoMeasureRigidityII} can be derived from \cite[Theorem 1.1 and Corollary 1.2]{EL2}. In fact, Theorem~\ref{theoMeasureRigidityII} should hold very generally, but, as an example, for a two-dimensional subgroup in general position of the Cartan subgroup of~$G=\operatorname{SL}_2(\R)^3$ this does not follow from \cite{EK} or \cite{EL2}. See also~\cite[Thm.~1.4]{EL3}, where a related high entropy theorem
	is proven under milder (but still not weakest possible) assumptions.
\end{rema}


\section{Equidistribution of $E(q)$ under $S$}\label{secEquidistUnderS}

Let $X$ be either $\TT^d$ or $\Gamma \backslash G$. We will write $T$ and $S$ for the two commuting maps that we consider on $X$.  As before, let \[D(S)=\{x\in X\mid \overline{\Or_S(x)}=X\}\] where $\Or_S(x)$ denotes either the forward orbit for $S$ an endomorphism or the full orbit for $S$ an automorphism.

\begin{lemm}\label{lemmSDensePointsTInv}  The set $D(S)$ is
	$T$-invariant.
\end{lemm}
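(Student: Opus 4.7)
The plan is to exploit the commutativity $TS=ST$ in the most direct way. Since $S^n\circ T=T\circ S^n$ for every admissible $n$ (every $n\geq 0$ if $S$ is only an endomorphism, every $n\in\ZZ$ if $S$ is an automorphism), we get the orbit identity
\[
 \Or_S(T(x))=T(\Or_S(x))
\]
for every $x\in X$. This is the only algebraic input needed.

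First, I would fix $x\in D(S)$ and aim to show $T(x)\in D(S)$. Apply the orbit identity above and then use that $T$ is continuous: this gives
\[
 T(\overline{\Or_S(x)})\;\subset\;\overline{T(\Or_S(x))}\;=\;\overline{\Or_S(T(x))}.
\]
Since $x\in D(S)$ by hypothesis, $\overline{\Or_S(x)}=X$, so the left-hand side equals $T(X)$, yielding $T(X)\subset\overline{\Or_S(T(x))}$.

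Next, the key observation is that in every setting considered in the paper, the map $T$ is surjective on $X$: on the torus it is either a hyperbolic automorphism or a hyperbolic epimorphism, and in the Lie-group setting it is the right translation $\theta_1$ by $a_1$ on $\Gamma\backslash G$, which is a homeomorphism. Therefore $T(X)=X$, which combined with the previous inclusion forces $\overline{\Or_S(T(x))}=X$, i.e.\ $T(x)\in D(S)$. This shows $T(D(S))\subset D(S)$, which is precisely the notion of $T$-invariance used in the paper.

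There is no real obstacle here: the statement is essentially a tautology once one records the commutation relation, the continuity of $T$, and the surjectivity of $T$. The only minor point to mention explicitly is that in the endomorphism case one should use the forward orbit in the identity $\Or_S(T(x))=T(\Or_S(x))$, but since $T$ and $S$ commute this causes no issue.
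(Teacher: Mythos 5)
Your proof is correct and amounts to the same argument as in the paper, organized as a direct implication ($x\in D(S)\Rightarrow T(x)\in D(S)$) rather than via the complement (the paper shows that preimages under $T$ of nondense points are nondense, which is the contrapositive); both rely on exactly the same three facts you identify --- commutativity, continuity of $T$, and surjectivity of $T$.
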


\begin{proof}
	If $x\notin D(S)$ so that $\overline{\Or_S(x)}\subset X$ is a closed proper $S$-invariant subset, then both $T(\overline{\Or_S(x)})$ and $T^{-1}(\overline{\Or_S(x)})$ are again closed proper $S$-invariant subsets. Therefore,
	$x\notin D(S)$ implies $T(x)\in D(S)$ and $T^{-1}(x)\in D(S)$ (resp.\ $T^{-1}(x)\subset D(S)$ if $T$ is not invertible). 
\end{proof}

We will write $d(\cdot,\cdot)$ for a metric on $X$.
Now let $\{x_i:i=1,\ldots\}$ be a dense subset of $X$. Using this set we will define in a moment a countable partition of \[ND(S)=X\setminus D(S),\] where we will use a total order of $\NN\times\NN$. We define $(i,n)<(i',n')$ if either $i+n<i'+n'$ or $i+n=i'+n'$ and $i<i'$. Now we define inductively
\begin{eqnarray*}
 ND(1,1)&=&\bigl\{x\in X\mid d(\overline{\Or_S(x)},x_1)\geq 1\bigr\},\\
 ND(i,n)&=&\bigl\{x\in X\mid d(\overline{\Or_S(x)},x_i)\geq\frac1n\bigr\}\setminus\bigcup_{(j,m)<(i,n)}ND(j,m)
\end{eqnarray*}
for all~$(i,n)\in\NN\times\NN$. It is clear that $ND(1,1)$ and $\bigcup_{(j,m)\leq (i,n)}ND(j,m)$  are closed sets,
and that~$ND(S)=\bigcup_{(i,n)}ND(i,n)$. 

Recall the definition of the proper $T$-invariant closed sets $E(q)$ from Section~\ref{subsecAverMeas}. Pick $q$ large so that $E(q)$ is a set with close to maximal dimension and close to maximal topological entropy for $T$. Using the variational principle, we choose a $T$-invariant measure on $E(q)$ of entropy close to the topological entropy of $T|_{E(q)}$. By convexity of measure-theoretic entropy and the ergodic decomposition, we may assume that $\nu$ is $T$-invariant and ergodic.  Specifically, we have the following corollary to the measure rigidity results.

\begin{theo}\label{theoNuAisFullMeas} 
	For large enough $q$, we have that $\nu(D(S)) =1$.  
\end{theo}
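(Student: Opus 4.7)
The plan is to argue by contradiction, combining the $T$-ergodicity of $\nu$ with the measure-rigidity conclusion of Proposition~\ref{PropErgCompHaar}. By Lemma~\ref{lemmSDensePointsTInv} the set $D(S)$ is $T$-invariant, so $T$-ergodicity of $\nu$ forces $\nu(D(S))\in\{0,1\}$; I would assume for contradiction that $\nu(ND(S))=1$ and aim to contradict Proposition~\ref{PropErgCompHaar}.

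The first task is to transfer this full mass from $\nu$ to the $S$-average $\mu=\lim_{N\to\infty}\mu_N$, where $\mu_N=\tfrac{1}{N}\sum_{k=0}^{N-1}S_*^k\nu$. The key observation is that for each pair $(i,n)\in\NN\times\NN$ the closed set
\[
A_{i,n}:=\bigl\{x\in X\mid d\bigl(\overline{\Or_S(x)},x_i\bigr)\geq 1/n\bigr\}=\bigcap_{k}S^{-k}\bigl\{y\in X\mid d(y,x_i)\geq 1/n\bigr\}
\]
is $S$-forward invariant, so $S^{-k}A_{i,n}\supset A_{i,n}$ for every $k\geq 0$, and $ND(S)=\bigcup_{(i,n)}A_{i,n}$. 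Enumerating the pairs as $(i_j,n_j)$ and setting $C_J:=\bigcup_{j\leq J}A_{i_j,n_j}$ produces closed, $S$-forward invariant sets increasing to $ND(S)$. Given $\varepsilon>0$, one chooses $J$ with $\nu(C_J)>1-\varepsilon$; then $\nu(S^{-k}C_J)\geq\nu(C_J)>1-\varepsilon$ for every $k\geq 0$, whence $\mu_N(C_J)\geq 1-\varepsilon$ uniformly in $N$. Since $C_J$ is closed, the portmanteau theorem applied along the convergent subsequence yields $\mu(C_J)\geq\limsup_N\mu_N(C_J)\geq 1-\varepsilon$, and letting $\varepsilon\to 0$ gives $\mu(ND(S))=1$, i.e.\ $\mu(D(S))=0$.

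To close the contradiction I would invoke Proposition~\ref{PropErgCompHaar}, which --- given that $q$ was chosen large enough that $h_\mu(T)$ is close to $h_{\mathrm{top}}(T)$, as arranged by Lemma~\ref{lemmTopMeasEntClose} --- produces Haar measure $m_X$ as an ergodic component of $\mu$ of some positive proportion $p>0$, in particular $\mu\geq p\,m_X$. The no-rank-one-factor hypothesis in the toral case (and the non-triviality of $a_2$ in each simple factor in the Lie group case) rules out root-of-unity eigenvalues for $S$, so $S$ is ergodic with respect to $m_X$ (by Howe--Moore in the Lie group setting), and Birkhoff's ergodic theorem yields $m_X(D(S))=1$. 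Consequently $\mu(D(S))\geq p\cdot m_X(D(S))=p>0$, contradicting $\mu(D(S))=0$.

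The main obstacle in this plan is the transfer step $\nu(ND(S))=1\Rightarrow\mu(ND(S))=1$: since $ND(S)$ is only an $F_\sigma$ set, weak-$*$ convergence alone does not transport its $\nu$-mass to $\mu$, and one genuinely needs the $S$-forward invariance of the closed building blocks $A_{i,n}$ to secure a uniform-in-$N$ lower bound on $\mu_N(C_J)$ before the portmanteau theorem can be applied. Everything after that is bookkeeping around the $T$-ergodic dichotomy, Proposition~\ref{PropErgCompHaar}, and the ergodicity of $S$ under $m_X$.
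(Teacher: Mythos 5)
Your proposal is correct and follows essentially the same strategy as the paper: assume $\nu(D(S))=0$, transport the full $\nu$-mass of $ND(S)$ through the averaging procedure to $\mu$, and contradict Proposition~\ref{PropErgCompHaar}. The only cosmetic difference is where the countable decomposition is applied: the paper decomposes the measure, writing $\nu=\sum_{(i,n)}\nu_{(i,n)}$ with $\nu_{(i,n)}=\nu|_{ND(i,n)}$ and extracting a common subsequence along which each average $\frac{1}{N_k}\sum_m S^m_*\nu_{(i,n)}$ converges to some $\mu_{(i,n)}$; portmanteau (for the open ball $B_{1/n}(x_i)$) then shows $\mu_{(i,n)}(B_{1/n}(x_i))=0$, whence each $\mu_{(i,n)}$ and thus $\mu$ is singular to $m_X$. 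You instead decompose the set, exhausting $ND(S)$ by the closed $S$-forward-invariant sets $C_J$ and using the monotonicity $\nu(S^{-k}C_J)\geq\nu(C_J)$ together with the portmanteau theorem on closed sets to push $\mu(C_J)\geq 1-\varepsilon$. Both routes produce a $T$- and $S$-invariant limit $\mu$ concentrated on $ND(S)$, and both then contradict Proposition~\ref{PropErgCompHaar}; your version formulates the contradiction via $m_X(D(S))=1$ (ergodicity of $S$ plus Birkhoff), while the paper formulates it as $\mu\perp m_X$. Note that the paper's step ``$\mu_{(i,n)}$ is $S$-invariant and kills an open ball, hence is singular to Haar'' implicitly relies on the same ergodicity of $S$ with respect to $m_X$ that you invoke explicitly; your observation that the no-rank-one-factor hypothesis (resp.\ the nontriviality of $a_2$ in each simple factor plus Howe--Moore) secures this is accurate and makes the proof self-contained, though it was left tacit in the paper.
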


\subsection{Proof of Theorem~\ref{theoNuAisFullMeas}}
Recall that we assume (as we may) that~$\nu$ is ergodic w.r.t.~$T$ and that~$D(S)$ is~$T$-invariant by Lemma~\ref{lemmSDensePointsTInv}. 
We assume indirectly that $\nu(D(S))=0$. 
 Then we may decompose~$\nu$ into $\nu=\sum_{(i,n)}\nu_{(i,n)}$ where $\nu_{(i,n)}=\nu|_{ND(i,n)}$ for all $(i,n)\in\NN\times\NN$. Using Tychonoff-Alaoglu theorem
we may choose a subsequence $N_k$ of the integers such that for all $(i,n)$ the average
\[
\frac1{N_k} \sum_{m=0}^{N_k-1}S_*^m\nu_{(i,n)}
\]
converges in the weak$^*$ topology to an $S$-invariant measure $\mu_{(i,n)}$. In particular, we obtain that 
$\frac1{N_k}\sum_{m=0}^{N_k-1}S_*^m\nu$ converges to an $S$-invariant probability measure $\mu=\sum_{(i,n)}\mu_{(i,n)}$. Note that by Lemma \ref{lemm-T-inv} $\mu$ is also $T$-invariant (but the same may not be true for $\mu_{(i,n)}$). 

Fix some $(i,n)\in\NN\times\NN$. 
As $\nu_{(i,n)}$-a.e. point $x\in X$ satisfies $d(S^m(x),x_i)\geq\frac1n$ it follows that $S_*^m\nu_{(i,n)}$
gives zero mass to the $\frac1n$-ball $B_{\frac1n}^X(x_i)$ around $x_i$. This implies furthermore that $\mu_{(i,n)}(B_{\frac1n}^X(x_i))=0$. Since $\mu_{(i,n)}$ is an $S$-invariant measure we conclude that $\mu_{(i,n)}$
is singular to the Haar measure $m_X$ of $X$. 

Since $(i,n)$ was arbitrary we obtain that $\mu=\sum_{(i,n)}\mu_{(i,n)}$ is singular to the Haar measure $m_X$ of $X$. However, this contradicts Proposition~\ref{PropErgCompHaar} which said that $\mu$ has the Haar measure as an ergodic component for the joint action of $T$ and $S$ with positive proportion. This contradiction shows that we must have $\nu(D(S))=1$ and concludes the proof of Theorem~\ref{theoNuAisFullMeas}.

\section{Hausdorff dimension from $\nu$}\label{secHDFN} 

In this section, we derive our main results, Theorems~\ref{thmCommToralMapsLarNondenDen},~\ref{thmCommToralMapsLarNondenDen2}, and~\ref{thmCommGroupMapsLarNondenDen}.  

\subsection{Proof of Theorems~\ref{thmCommToralMapsLarNondenDen}~and~\ref{thmCommToralMapsLarNondenDen2}}\label{subsecProofofMainThmforTorus}  There are two ingredients:  Theorem~\ref{theoNuAisFullMeas} and the formula relating entropy, dimension, and Lyapunov exponents developed by Ledrappier and Young in~\cite{LY1} and~\cite{LY2}.  First, we present the case of toral automorphisms.  Then we present the changes necessary for the case of toral endomorphisms.

\subsubsection{Toral automorphisms}  Pick $q$ large so that the ergodic measure $\nu$ on $E(q)$ from Theorem~\ref{theoNuAisFullMeas} has measure-theoretic entropy $h_{\nu}(T \mid_{E(q)})$ close to $\htop(T)$ (see Section~\ref{subsecAverMeas}) and the conclusion of the theorem applies.  We use the Ledrappier-Young formula (\cite{LY2}, Theorem C$'$):
\begin{theo}[Ledrappier-Young Formula] Let $f:M \rightarrow M$ be a $C^2$-diffeomorphism of a compact Riemannian manifold and let $m$ be an ergodic Borel probability measure on $M$.  Let $\kappa_1 > \cdots >\kappa_u$ denote the distinct positive Lyapunov exponents of $f$, and let $\delta_i$ be the dimension of $m$ on the $W^i$-manifolds.  Then for $1 \leq i \leq u$ there are numbers $\gamma_i$ with $0 \leq \gamma_i \leq \dim(E_i)$, such that \[\delta_i = \sum_{j \leq i} \gamma_j\] for $i = 1, \cdots, u$ and \[h_m(f) = \sum_{i \leq u} \kappa_i \gamma_i.\]
\end{theo}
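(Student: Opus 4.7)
The plan is to follow the classical Ledrappier--Young strategy, which proceeds by constructing an increasing filtration of unstable foliations and analyzing the disintegration of $m$ along each level. First, I would apply Oseledets' theorem to split the tangent bundle into Lyapunov subspaces $E_1, \ldots, E_u$ corresponding to the positive exponents $\kappa_1 > \cdots > \kappa_u$, and then invoke Pesin's nonuniform hyperbolicity theory on a full-measure set of Lyapunov-regular points to integrate the increasing bundle $E_1 \oplus \cdots \oplus E_i$ into local ``slow unstable'' manifolds $W^i$; the fastest one $W^u$ is the usual Pesin unstable manifold, while the intermediate $W^i$ are only H\"older foliations of Pesin sets.

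Next, for each $i$, I would construct a measurable partition $\xi_i$ subordinate to $W^i$ in the sense of Ledrappier and Strelcyn: each atom lies in a single $W^i$-leaf, contains a neighborhood of its base point within that leaf, refines under $f^{-1}$, and has diameters contracting along the forward orbit. Such $\xi_i$ yield conditional measures $m_x^{\xi_i}$ supported in $W^i(x)$, and the dimension $\delta_i$ is defined as the almost sure pointwise dimension of $m_x^{\xi_i}$ on the $W^i$-leaf. The additive identity $\delta_i = \sum_{j \leq i}\gamma_j$ with $\gamma_j \in [0,\dim E_j]$ is then obtained by disintegrating the conditional measure on $W^i$ further along the finer $W^{i-1}$-foliation and bounding the transverse dimension by comparing transverse ball radii under $f^{-n}$, where the $E_i$-expansion rate on Pesin blocks gives the right scaling.

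For the entropy side, I would express $h_m(f)$ as a telescoping sum of Rokhlin conditional entropies $H_m(f^{-1}\xi_i \mid \xi_i) - H_m(f^{-1}\xi_{i-1} \mid \xi_{i-1})$, with $\xi_0$ trivial. A Shannon--McMillan--Breiman calculation carried out along each $W^i$-leaf, together with the uniform expansion rate $\kappa_i$ on $E_i$ furnished by Pesin blocks, identifies each piece of the telescoping sum with $\kappa_i \gamma_i$, producing $h_m(f) = \sum_i \kappa_i \gamma_i$.

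The main obstacle, as in the original Ledrappier--Young work, is the simultaneous construction of all the partitions $\xi_i$ with the correct subordination, monotonicity, and contraction properties, together with the verification that the pointwise dimensions of the conditional measures exist and are almost-surely constant. The nonuniformly hyperbolic setting, where all Pesin-theoretic estimates hold only on blocks of arbitrarily large but not full measure, makes both the measurability arguments and the transverse scaling estimates technically delicate; this is why any honest proof occupies two long papers and why in the present context one cites the theorem rather than reproving it.
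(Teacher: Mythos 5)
The paper does not prove this theorem: it is quoted verbatim as Theorem~C$'$ of Ledrappier--Young (\cite{LY2}), and the paper simply cites it. Your sketch is a faithful high-level outline of the actual Ledrappier--Young strategy---Oseledets/Pesin splitting, Ledrappier--Strelcyn subordinate partitions $\xi_i$, pointwise dimensions of conditional measures, the transverse dimension bookkeeping giving $\delta_i = \sum_{j\le i}\gamma_j$, and the telescoping conditional-entropy identity for $h_m(f)=\sum_i\kappa_i\gamma_i$---and you correctly identify both the main technical obstacle (constructing all the $\xi_i$ compatibly in the nonuniformly hyperbolic setting) and the reason a full proof is out of scope, which is exactly why the paper cites rather than reproves it. There is nothing to compare against: you and the authors defer to the same source, and your summary of that source is accurate.
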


 Here the $W^i$ denotes the $i$-th unstable manifold for the dynamical system (which is  tangent to~$\bigoplus_{j\leq i}E_j$). 
In particular, $W^u$ is the unstable manifold.  The full set of Lyapunov exponents (positive, zero, and negative) gives rise to the corresponding decomposition of the tangent space at $x$, \[E_1(x) \oplus \cdots \oplus E_r(x).\]  

In our case
these subspaces of course simply correspond to the generalized eigenspaces. 
Moreover, the set of Lyapunov exponents (for $T$) is $\{\log|\lambda_i|\}$ where the $\lambda_i$ are the eigenvalues from Section~\ref{subsecProofthmLowerEntropyBnd}---note that the indexing of the Lyapunov exponents, the $\kappa_j$, and the indexing of the eigenvalues, the $\lambda_i$, may not match because the Lyapunov exponents are distinct, while the eigenvalues can contain duplicates.

As previously noted, we have that $\htop(T) = \sum_{i \leq u} \kappa_i \dim(E_i)$ where the sum is over all the positive Lyapunov exponents.  Since, for our choice of $q$, $h_\nu(T)= h_{\nu}(T \mid_{E(q)})$ is close to $\htop(T)$ and, by the Ledrappier-Young formula, each $0\leq \gamma_i(q) \leq \dim(E_i)$, we have $\gamma_i(q) \rightarrow \dim(E_i),$ as $q \rightarrow \infty$.  Again applying the Ledrappier-Young formula, we have that 
\begin{eqnarray} \label{eqnUnstableDimForq}
	\delta_u(q) = \sum_{i \leq u} \gamma_i(q) \rightarrow \dim(W^u)
\end{eqnarray}
 as $q \rightarrow \infty$.

Now $\delta_u$ is the dimension of $\nu$ on the $W^u$-manifold, which we now define following~\cite{LY1} and~\cite{LY2}.  The measure $\nu$ gives rise to conditional measures $\nu_x^u$ on $W^u(x)$ (for $\nu$-a.e. point $x$)---note that $\nu_x^u$ gives full measure to $E(q) \cap W^u(x)$.  The conditional measures allow us to define a pointwise dimension.  We first state the general definition:  the \textit{pointwise dimension of a measure $m$ at $x$} is defined to be \[\lim_{\varepsilon \rightarrow 0^+}\frac{\log m(B(x, \varepsilon))}{\log \varepsilon}\] should the limit exist.  Then Proposition~7.3.1 of~\cite{LY2} states that the pointwise dimension of $\nu_x^u$ at $x$ for $\nu$-a.e. $x$ exists and is equal to $\delta_u$.

Now, since $\nu(D(S))=1$ by Theorem~\ref{theoNuAisFullMeas}, we have that $\nu_x^u(X\setminus D(S))=0$ for $\nu$-a.e.~$x$.  Applying the mass distribution principle, following Young~\cite{Yo}, we have that 
\begin{eqnarray} \label{eqnDimonUnstable}
	\dim(D(S) \cap E(q) \cap W^u(x)) \geq \delta_u(q) \textrm{ for } \nu\textrm{-a.e. } x.
\end{eqnarray}

Since the mapping $T$ is invertible, we note that $E(q)$ is also $T^{-1}$-invariant.  Also, it is well-known that $\htop(T^{-1})=\htop(T)$ and $h_\nu(T^{-1})=h_\nu(T)$. Now, for $T^{-1}$ the unstable and stable manifolds switch.  Thus, applying the proceeding to $T^{-1}$ but keeping the notation for stable and unstable manifolds for $T$, we have that (\ref{eqnUnstableDimForq}) states that the dimension
$\delta_s(q)$ of $\nu$ along the $W^s$-foliation satisfies
\begin{eqnarray} \label{eqnStableDimForq}
	\delta_s(q) \rightarrow \dim(W^s)
\end{eqnarray}
 as $q \rightarrow \infty$.  
Let~$B_1=\{x\mid$~\eqref{eqnDimonUnstable} holds for~$x\}$ so that~$\nu(B_1)=1$. Then~$\nu_x^s(B_1\cap W^s(x))=1$ a.e.,
where~$\nu_x^s$ denotes the conditional measure on the stable manifold~$W^s(x)$. 
Arguing now in the same way as for \eqref{eqnDimonUnstable} we obtain  
\begin{eqnarray} \label{eqnDimonStable}
	\dim(B_1 \cap W^s(x)) \geq \delta_s(q) \textrm{ for } \nu\textrm{-a.e. } x.
\end{eqnarray}

To obtain the Hausdorff dimension on $X$, we use the Marstrand Slicing Theorem~\cite{Mar}, which we quote from~\cite{Kl2}:

\begin{lemm}[Marstrand Slicing Theorem]\label{lemmMarstrandSlicingTheorem} Let $M_1$ and $M_2$ be Riemannian manifolds, $A_1 \subset M_1$, $B \subset M_1 \times M_2$.  Denote by $B_a$ the intersection of $B$ with $\{a\} \times M_2$ (the slice of $B$ at an element $a$ of $A_1$) and assume that $B_a$ is nonempty for all $a \in A_1$.  Then \[\dim(B) \geq \dim(A_1) + \inf_{a \in A_1} \dim(B_a).\]
\end{lemm}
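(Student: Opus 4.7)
The plan is to prove the stated inequality by constructing an explicit probability measure on $B$ whose mass on metric balls of radius $r$ decays like $r^{s+t}$, for any $s<\dim(A_1)$ and $t<\inf_{a\in A_1}\dim(B_a)$; the mass distribution principle then yields $\dim(B)\geq s+t$, and letting $s$ and $t$ approach their suprema concludes the argument. Since Hausdorff dimension is a local bi-Lipschitz invariant and Riemannian manifolds are locally bi-Lipschitz to Euclidean balls, I would first reduce to the case $M_1=\RR^{n_1}$ and $M_2=\RR^{n_2}$ by a countable exhaustion of $M_1\times M_2$ by coordinate charts. Let $B_r(z)$ denote the metric ball of radius $r$ about a point $z$, where context distinguishes this from the slices $B_a$.

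Fix $s$ and $t$ as above. By Frostman's lemma applied to $A_1$, produce a Borel probability measure $\nu_1$ supported on $A_1$ with $\nu_1(B_r(a))\leq C_1 r^s$ for all $a$ and $r>0$. For each $a\in A_1$, Frostman's lemma applied to the slice $B_a\subset\{a\}\times M_2$ gives a probability measure $\mu_a$ on $B_a$ with $\mu_a(B_r(x))\leq C_2 r^t$, where a standard pigeonhole on the Frostman constants allows us to take $C_2$ independent of $a$ after restricting to a subset of $A_1$ of the same Hausdorff dimension. Given a measurable choice $a\mapsto\mu_a$, define a Borel measure $\mu$ on $B$ by
\[
\mu(E)=\int_{A_1}\mu_a(E\cap B_a)\,d\nu_1(a).
\]
For any $(a,b)\in B$ and $r>0$, the product-metric ball $B_r((a,b))$ is contained in $B_r(a)\times B_r(b)$, so
\[
\mu\bigl(B_r((a,b))\bigr)\leq\int_{B_r(a)}\mu_{a'}\bigl(B_r(b)\cap B_{a'}\bigr)\,d\nu_1(a')\leq C_2 r^t\,\nu_1(B_r(a))\leq C_1C_2\,r^{s+t},
\]
which by the mass distribution principle forces $\dim(B)\geq s+t$.

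The main obstacle is the measurable selection $a\mapsto\mu_a$ required in order for the defining integral of $\mu$ to make sense. I would address this via the Kuratowski--Ryll-Nardzewski selection theorem applied to the set-valued map on $A_1$ whose value at $a$ is the weak-$\ast$ compact set of Frostman probability measures on $B_a$ with the uniform decay constant $(C_2,t)$; weak-$\ast$ measurability of the graph holds provided $B$ itself is Borel, which one may arrange by first replacing $B$ with a Borel subset of the same Hausdorff dimension (standard for analytic sets). An alternative that sidesteps selection entirely is a potential-theoretic argument: show that the $(s+t-\varepsilon)$-energy of a suitably defined $\mu$ is finite for every small $\varepsilon>0$ by the same product-ball estimate above combined with the standard energy bounds coming from the Frostman conditions on $\nu_1$ and the $\mu_a$.
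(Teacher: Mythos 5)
The paper does not prove this lemma: it is quoted verbatim from Kleinbock's article, which in turn attributes it to Marstrand's 1954 paper, so there is no in-paper proof against which to compare. Your proposal is a legitimate, recognizably standard route to the result --- Frostman on the base, Frostman on each slice, a fibered product measure, and the mass distribution principle --- and the decisive estimate $\mu\bigl(B_r((a,b))\bigr)\leq C_1C_2\,r^{s+t}$ is computed correctly using $B_r((a,b))\subset B_r(a)\times B_r(b)$. The reduction from Riemannian manifolds to Euclidean space by countable exhaustion via bi-Lipschitz charts and the pigeonhole to make the slice Frostman constant uniform on a subset of nearly full dimension (using countable stability of Hausdorff dimension) are both sound in principle.

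Where the proposal is genuinely more fragile than it needs to be is precisely the issue you yourself flag: you build a family $\{\mu_a\}$ of measures on the slices and then must make $a\mapsto\mu_a$ measurable before $\mu(E)=\int\mu_a(E\cap B_a)\,d\nu_1(a)$ is even defined. Invoking Kuratowski--Ryll-Nardzewski here is possible but heavy, and the weak-$\ast$ measurability of the graph of the set-valued map $a\mapsto\{\text{Frostman measures on }B_a\}$ is itself a nontrivial verification (one also needs each $B_a$ analytic, not merely $B$ analytic, to apply Frostman to the slices). The classical proof (Marstrand; see also Falconer, \emph{Fractal Geometry}, Cor.~7.12, or Mattila) sidesteps measure selection on the fibers entirely: apply Frostman only to $A_1$ to obtain $\nu_1$, take an arbitrary $\delta$-cover $\{U_i\}$ of $B$, observe that for each $a$ the sets $\{U_i\}_{a\in\pi_1(U_i)}$ slice to a $\delta$-cover of $B_a$, and estimate
\[
\sum_i(\operatorname{diam}U_i)^{s+t}\;\geq\;\tfrac{1}{C_1}\sum_i\nu_1\bigl(\pi_1(U_i)\bigr)(\operatorname{diam}U_i)^t
\;=\;\tfrac{1}{C_1}\int_{A_1}\sum_{i:\,a\in\pi_1(U_i)}(\operatorname{diam}U_i)^t\,d\nu_1(a)
\;\geq\;\tfrac{1}{C_1}\int_{A_1}\mathcal{H}^t_\delta(B_a)\,d\nu_1(a),
\]
and the last integrand is bounded below by a constant after the same pigeonhole on $a\mapsto\mathcal{H}^t_\infty(B_a)$. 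This replaces a measurable selection of \emph{measures} by measurability of the scalar function $a\mapsto\mathcal{H}^t_\delta(B_a)$, which is far less delicate, and it yields $\mathcal{H}^{s+t}(B)>0$ directly. I would recommend either adopting this covering argument outright, or at least noting that the selection issue in your construction can be avoided this way; as written, the Kuratowski--Ryll-Nardzewski step is the one place a referee would push back. One small further caveat: the uniformization by pigeonhole gives a subset of $A_1$ of dimension arbitrarily close to, not necessarily equal to, $\dim(A_1)$, but since you then let $s\uparrow\dim(A_1)$ anyway, this does not affect the conclusion.
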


Lifting the measure~$\nu$ on~$\TT^d$ to a~$\ZZ^d$-invariant measure on~$\RR^d$ we obtain a measure on the product of the stable and unstable subspaces for $T$ on $\RR^d$. We let~$M_1$ be the stable subspace (i.e.~the sum of the contracted generalized eigenspaces for~$T$) and let~$M_2$ be the unstable subspace. Take one point $x\in X$ that satisfies \eqref{eqnDimonStable}, and let $A_1$ be the preimage of $B_1 \cap W^s(x)$ on one coset of the stable subspace in $\RR^d$.
Applying the Marstrand Slicing theorem with \eqref{eqnDimonUnstable} and \eqref{eqnDimonStable} we obtain 
\begin{eqnarray}\label{eqnDimonBoth}
	\dim(D(S) \cap E(q)) \geq \delta_u(q) + \delta_s(q).
\end{eqnarray}
  Consequently, we have that \[\dim\bigl(D(S) \cap \bigcup_{q} E(q)\bigr) \geq \dim(W^u) + \dim(W^s) = \dim(X)\] by applying (\ref{eqnUnstableDimForq}) and (\ref{eqnStableDimForq}), which proves the desired result.

\subsubsection{Toral endomorphisms}\label{subsubsecToralEndoAdjugate}
The Ledrappier-Young formula is for homeomorphisms.  There is, however, a generalization for endomorphisms, namely~\cite[Theorem~2.7 and Proposition~2.5]{QX}.  Applying this generalization to our endomorphism $T$ and following the proof for automorphisms yields (\ref{eqnUnstableDimForq}) and (\ref{eqnDimonUnstable}). As $q$ is arbitrary, the desired result follows.

%
%
%
%
%
%
%

\subsection{Proof of Corollary~\ref{coroCommToralMapsLarNondenDen}} \label{subsecToralProofManyMaps} 
As the automorphism group of~$\TT^d$ equals $\operatorname{GL}(d,\ZZ)$, we see that the collection of maps appearing in the corollary is countable. Each map $S_k$ in the collection of maps has a corresponding set $D(S_k)$.  Theorem~\ref{theoNuAisFullMeas} implies that \[\nu\bigl(\bigcap_k D(S_k)\bigr)=1.\]  Replacing $D(S)$ with $\bigcap_k D(S_k)$ in Section~\ref{subsecProofofMainThmforTorus} proves the desired result.

\subsection{Proof of Theorem~\ref{thmCommGroupMapsLarNondenDen}} 

 There are three ingredients:  Theorem~\ref{theoNuAisFullMeas}, the Ledrappier-Young formula, and a double application of the Marstrand slicing theorem.  We recall the notation from Section~\ref{subsecLieGroupCase}.  Summing the root spaces for which $|\lambda|>1$ for our fixed element corresponding to $a_1$ yields $\h^+$, and summing the  root spaces for which $|\lambda|<1$ for our fixed element yields $\h^-$.  These subalgebras correspond to unipotent subgroups, namely \[H^+ = \exp(\h^+) \textrm{ and } H^- = \exp(\h^-).\]  And $x H^+$ (i.e. the orbit of $x \in X$ under the right action of the subgroup $H^+$) is the leaf of the unstable manifold through $x \in X$; likewise, $x H^-$ is the   stable manifold through $x$.  Furthermore, the eigenspace $\h^0$ of the adjoint action of the element $a_1$ for the eigenvalue $1$ is the Lie algebra of the subgroup $C_G(a_1)$. As the three Lie algebras are transversal the subgroups \[H^+, H^-, \textrm{ and } H^0=C_G(a_1)\] can be used to define a local coordinate system in $G$. In fact, $H^-H^+H^0\subset G$ contains an open neighborhood of the identity of $G$.

We now  argue just as in the proof for toral automorphisms in Section~\ref{subsecProofofMainThmforTorus}. By the Ledrappier-Young formula we know that, for any measurable $A$ with $\nu(A)=1$, the dimension of $A\cap  xH^+$ and $A\cap xH^-$ are close to $\dim H^+$ and $\dim H^-$ for $\nu$-a.e. $x$. Similar to the torus case we wish to iterate this statement together with the Marstrand Slicing Theorem. However, in order to obtain the desired conclusion, we also have to consider the centralizer directions $H^0$, which we do so by using the following argument similar to that of~\cite[Section~1.5]{KM2}.

Let $h\in H^0$ be arbitrary. As $h$ commutes with $a_1$ it is clear that the push-forward $\nu h$ of $\nu$
under multiplication by $h$ on the right is a $\theta_1$-invariant probability measure with the same entropy as for $\nu$ and is supported on $E(q)h\subset ND(\theta_1)$. Applying Theorem \ref{theoNuAisFullMeas} to $\nu h$ we obtain
$(\nu h)(D(\theta_2))=1$. Equivalently we have shown  for every $h\in H^0$ and $\nu$-a.e. $x$ that $xh\in ND(\theta_1)\cap D(\theta_2)$. Applying Fubini there exists a subset $A_1$ of full $\nu$-measure such that for all $x\in A_1$ and for a.e.~$h\in H^0$ (we use the Haar measure on $H^0$) we have $xh\in ND(\theta_1)\cap D(\theta_2)$ and in particular \[\dim(xH^0\cap ND(\theta_1)\cap D(\theta_2))=\dim H^0.\] 

By the same argument using the Ledrappier-Young formula as in the torus case we obtain now that there exists a set $A_2$ of full $\nu$-measure such that $\dim(y H^+\cap A_1)\geq \delta_u(q)$ for all $y\in A_2$. Also by the same argument we find a set $A_3$ of full $\nu$-measure such that $\dim (z H^-\cap A_2)\geq \delta_s(q)$ for $z\in A_3$. Now choose and fix some $z\in A_3$. Use the Marstrand slicing theorem with $M_1=H^-$, the set $\{h^-\in H^-\mid zh^-\in A_2\}$, and $M_2=H^+$ to obtain $\dim(zH^-H^+\cap A_1)\geq \delta_s(q)+\delta_u(q)$. Another application of the Marstrand slicing theorem then gives \[\dim(ND(\theta_1)\cap D(\theta_2))\geq\delta_s(q)+\delta_u(q)+\dim(H^0).\] As $q$ is arbitrary, the theorem follows.
  
\subsection{Proof of Corollary~\ref{coroCommGroupMapsLarNondenDen}}  The proof is analogous to the proof in Section~\ref{subsecToralProofManyMaps}.


\begin{thebibliography}{99}

\bibitem{AN} A. G. Abercrombie and R. Nair, {\em An exceptional set in the ergodic theory of expanding maps on manifolds,} Monatsh. Math. {\bf 148} (2006), 1--17.






\bibitem{BFK} R. Broderick, L. Fishman, and D. Kleinbock, {\em Schmidt's game, fractals, and orbits of toral endomorphisms,} Ergodic Theory Dynam. Systems {\bf 31} (2011), 1095--1107.


\bibitem{BS} M. Brin and G. Stuck,  ``Introduction to dynamical systems.'' Cambridge University Press, Cambridge, UK, 2002.



%


\bibitem{Da} S. G. Dani, {\em On badly approximable numbers, Schmidt games and bounded orbits of flows,} M. M. Dodson and J. A. G. Vickers (eds), {\em Number theory and dynamical systems,} London Mathematical Society Lecture Note Series~{\bf 134}, Cambridge University Press, Cambridge, UK (1989).


\bibitem{Da2} S. G. Dani, {\em On orbits of endomorphisms of tori and the Schmidt game,} Ergodic Theory Dynam. Systems {\bf 8} (1988), 523-529.



\bibitem{Do2} D. Dolgopyat, {\em Bounded orbits of Anosov flows}, Duke Math. J. {\bf 87} (1997),  87--114.



\bibitem{E-F-S} M. Einsiedler, L. Fishman, U. {\em Shapira. Diophantine Approximations on Fractrals}, Geom. Funct. Anal. {\bf 21} (2011), 14--35. 


\bibitem{EK} M. Einsiedler and A. Katok, {\em Rigidity of measures -- the high entropy case and non-commuting foliations,} Israel J. Math. {\bf 148} (2005), 169--238. 




\bibitem{EL} M. Einsiedler and E. Lindenstrauss, {\em Rigidity properties of $\ZZ^d$-actions on tori and solenoids,}
Electron. Res. Announc. Amer. Math. Soc. {\bf 9} (2003), 99--110.



\bibitem{Pisa} M. Einsiedler and E. Lindenstrauss, {\em  Diagonal actions on locally homogeneous spaces.}
 Homogeneous flows, moduli spaces and arithmetic.
 155--241, Clay Math. Proc., 10, Amer. Math. Soc., Providence, RI,  2010.

\bibitem{EL2} M. Einsiedler and E. Lindenstrauss, {\em On measures invariant under tori on quotients of semi-simple groups,} preprint (2011). Available at \url{http://www.math.ethz.ch/~einsiedl/maxsplit.pdf}

\bibitem{EL3} M. Einsiedler and E. Lindenstrauss, {\em Joinings of higher rank torus actions on homogeneous spaces,} in preparation.

\bibitem{ELW} M. Einsiedler, E. Lindenstrauss, and Z. Wang, Rigidity properties of abelian actions on tori and solenoids, in preparation.


\bibitem{ET} M. Einsiedler and J. Tseng, {\em Badly approximable systems of affine forms, fractals, and Schmidt games,} J. Reine Angew. Math. {\bf 660} (2011), 83--97. 
%


\bibitem{Far} D. F\"{a}rm, {\em Simultaneously non-dense orbits under different expanding maps.}
Dyn. Syst. {\bf 25} (2010),  531--545. 

%
%







\bibitem{Host} B. Host, {\em  Nombres normaux, entropie, translations.}
 Israel J. Math.  {\bf 91}  (1995),  no. 1-3, 419--428.


\bibitem{Rudolph-Johnson} A. Johnson, D. J. Rudolph, { \em Convergence under ×q of ×p invariant measures on
 the circle.}
 Adv. Math. {\bf 115}  (1995),  no. 1, 117--140.


\bibitem{Kl2} D. Kleinbock, {\em Nondense orbits of flows on homogeneous spaces,} Ergodic Theory Dynam. Systems {\bf 18} (1998), 373-396.

\bibitem{KM2} D. Kleinbock and G. Margulis, {\em Bounded orbits of nonquasiunipotent flows on homogeneous spaces}, Amer. Math. Soc. Transl.~{\bf 171} (1996), 141--172.




\bibitem{KW} D. Kleinbock and B. Weiss, {\em Modified Schmidt games and Diophantine approximation with weights}, Adv. Math. {\bf 223} (2010), 1276--1298.

\bibitem{KW2} D. Kleinbock and B. Weiss, {\em Modified Schmidt games and a conjecture of Margulis}, preprint, arXiv:1001.5017v2 (2010).



\bibitem{LY1}F. Ledrappier and L.-S. Young, {\em The metric entropy of diffeomorphisms. I. Characterization of measure satisfying Pesin's entropy formula.} Ann. of Math. {\bf 122} (1985), 509-539.

\bibitem{LY2}F. Ledrappier and L.-S. Young, {\em The metric entropy of diffeomorphisms. I.  Relations between entropy, exponents and dimension.} Ann. of Math. {\bf 122} (1985), 540-574.

\bibitem{Alex-Beverly} B. Lytle and A. Maier, {\em Simultaneous dense and nondense points for noncommuting toral endomorphisms}, in preparation.

\bibitem{MT} B. Mance and J. Tseng, {\em Bounded L\"uroth expansions: applying Schmidt games where infinite distortion exists,} Acta Arith. {\bf 158} (2013), 33--47.


\bibitem{Mar} J. M. Marstrand.  {\em The dimension of Cartesian products sets,} Proc. Cambridge Philos. Soc. {\bf 50} (1954), 198--202.

\bibitem{Mc} C. McMullen, {\em Winning sets, quasiconformal maps and Diophantine approximation,} Geom. Funct. Anal. {\bf 20} 2010, 726--740.


\bibitem{QX} M. Qian and J.-S. Xie, {\em Entropy formula for endomorphisms:  relations between entropy, exponents and dimension}, Discrete Contin. Dyn. Syst. {\bf 21} (2008), 367--392.

\bibitem{Sch2} W. Schmidt, {\em Badly approximable numbers and certain games,} Trans. Amer. Math. Soc. {\bf 123} (1966), 178--199.


\bibitem{Se} D. Serre, ``Matrices. Theory and applications.'' Second edition. Graduate Texts in Mathematics, {\bf 216}. Springer, New York, 2010.

\bibitem{Shi} R. Shi, {\em Convergence of measures under diagonal actions on homogeneous spaces,}
Adv. Math. {\bf 229} (2012), no. 3, 1417–-1434.







%


\bibitem{T4} J. Tseng, {\em Schmidt games and Markov partitions,} Nonlinearity {\bf 22} (2009), 525-543. 






%
%
%
%
%
%
%


%

%
%
%
%



\bibitem{Ur} M. Urba\'nski, {\em The Hausdorff dimension of the set of points with nondense orbit under a hyperbolic dynamical system,} Nonlinearity {\bf 4} (1991), 385--397.

\bibitem{Yo} L.-S. Young, {\em Dimension, entropy, and Lyapunov exponents,} Ergodic Theory Dynam. Systems {\bf 2} (1982), 109-124.

\end{thebibliography}
\end{document}